\newtheorem{theorem}{Theorem}
\newtheorem*{conjecture*}{Conjecture}
\newtheorem{lemma}[theorem]{Lemma}
\newtheorem{corollary}[theorem]{Corollary}
\newtheorem{observation}[theorem]{Observation}
\theoremstyle{definition}
\newtheorem{definition}[theorem]{Definition}
\title{A Compact Cycle Formulation for the Multiperiodic Event Scheduling Problem}
\author[1]{Rolf Nelson van Lieshout}
\author[2]{Niels Lindner}
\affil[1]{Department of Operations, Planning, Accounting, and Control,\protect\\School of Industrial Engineering, Eindhoven University of Technology \protect\\ r.n.v.lieshout@tue.nl\vspace{\baselineskip}}
\affil[2]{Zuse Institute Berlin \protect\\lindner@zib.de}
\begin{document}

\maketitle

\begin{abstract}
The Periodic Event Scheduling Problem (PESP) is a fundamental model in periodic timetabling for public transport systems, assuming a common period across all events. However, real-world networks often feature heterogeneous service frequencies. This paper studies the Multiperiodic Event Scheduling Problem (MPESP), a generalization of PESP that allows each event to recur at its own individual period. While more expressive, MPESP presents new modeling challenges due to the loss of a global period.
We present a cycle-based formulation for MPESP that extends the strongest known formulation for PESP and, in contrast to existing approaches, is valid for any MPESP instance.
Crucially, the formulation requires a cycle basis derived from a spanning tree satisfying specific structural properties, which we formalize and algorithmically construct, extending the concept of sharp spanning trees to rooted instances. 
We further prove a multiperiodic analogue of the cycle periodicity property.
Our new formulation solves nearly all tested instances—including several large-scale real-world public transport networks—to optimality or with small optimality gaps, dramatically outperforming existing arc-based models. The results demonstrate the practical potential of MPESP in capturing heterogeneous frequencies without resorting to artificial event duplication.
\end{abstract}

\section{Introduction}
Ever since its introduction by \cite{serafini1989mathematical}, the Periodic Event Scheduling Problem (PESP) has become a cornerstone of periodic timetabling models for public transport planning \citep{duarte2025fifty}.
 PESP is defined on a digraph, or \textit{event-activity network} $G=(V,A)$, bounds $l,u \in \mathbb{R}^A$, weights $w\in \mathbb{R}^A$ and a period time $T$, and seeks to find a \textit{periodic timetable} $\pi\in \mathbb{R}^V$ and a \textit{periodic tension} $x\in \mathbb{R}^A$ such that 
\begin{enumerate}
    \item[(1)] $\pi_j - \pi_i \equiv x_{ij} \bmod T$ for all $(i,j) \in A$,
    \item[(2)] $l\leq x\leq u$, 
    \item[(3)] $w^\top x$ is minimum,
\end{enumerate}
or decide that no such $(\pi,x)$ exists. 

In the context of public transport timetabling, events represent departures or arrivals of services at stations, activities represent, e.g., driving, dwelling or transferring, bounds represent the minimum and maximum duration of those activities, and the period represents the cycle time of the timetable. If the weights additionally represent the (expected) number of passengers that use activities, the objective corresponds to minimizing passenger travel time. Figure~\ref{fig:fromLineToEAN} presents a public transport network with three bidirectional lines, and the associated event-activity network, including all possible transfer activities at the center station. Assuming $T=60$ minutes, if the departure of the orange line at the leftmost station is scheduled at time 19, this event recurs every hour at xx:19. 

\begin{figure}[!t]
    \centering
    \begin{subfigure}[t]{0.45\textwidth}
        \centering
        \includegraphics[width=\linewidth]{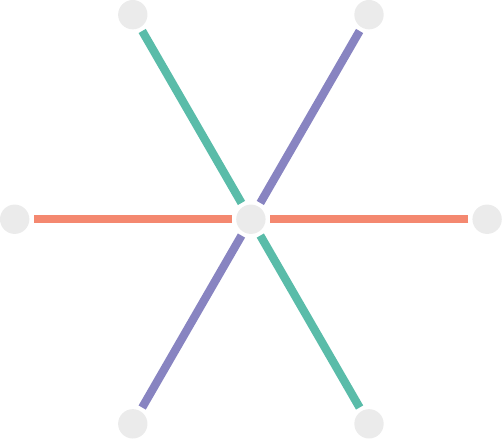}
        \caption{Three bidirectional lines, same frequency.}
        \label{fig:fromLineToEANa}
    \end{subfigure}
    \hfill
    \begin{subfigure}[t]{0.45\textwidth}
        \centering
        \includegraphics[width=\linewidth]{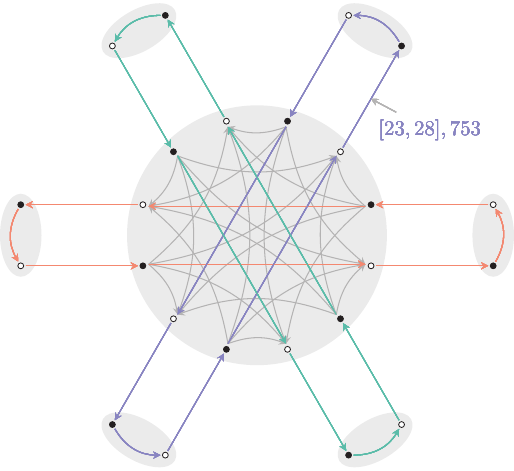}
        \caption{Event-activity network. The highlighted driving activity has 753 expected passengers and a duration between 23 and 28.}
        \label{fig:fromLineToEANb}
    \end{subfigure}
    \caption{Line network and the corresponding event-activity network}
    \label{fig:fromLineToEAN}
\end{figure}

This paper focuses on the \textit{Multiperiodic Event Scheduling Problem} (MPESP), a generalization of PESP in which events may occur at different individual frequencies. Instead of a global period $T$, each event $i \in V$ is associated with an event-specific period $T_i \in \mathbb{N}$, meaning the event recurs at times $\pi_i + z T_i$ for any integer $z$. Defining $T_a := \gcd\{T_i,T_j\}$ for all $(i,j)\in A$, MPESP replaces condition (1) in PESP by

\begin{itemize}[label={},leftmargin=*, align=left]
    \item[(MP1)]\label{item:mp} $\pi_j - \pi_i \equiv x_a \bmod T_a$ for all $a = (i,j) \in A$.
\end{itemize}
As in the single-period case, we call a vector $x \in \mathbb R^A$ a \emph{periodic tension} if there is a \emph{periodic timetable} $\pi \in \mathbb R^V$ satisfying (MP\ref{item:mp}).
MPESP provides a more compact and natural network representation for real-world public transport systems where services operate at different frequencies—for example, urban metro lines may run every 5 minutes, while regional trains arrive every 30 or 60 minutes. In the classical PESP, such heterogeneity is handled by duplicating higher-frequency events and linking them via synchronization activities, which significantly increases model size and complexity. Figure~\ref{fig:hetPesp} demonstrates this: a network with one hourly and one half-hourly service results in a much larger event-activity network using PESP compared to MPESP.

\begin{figure}[!ht]
    \centering
    \begin{subfigure}[t]{0.29\textwidth}
        \centering
        \includegraphics[width=0.8\linewidth]{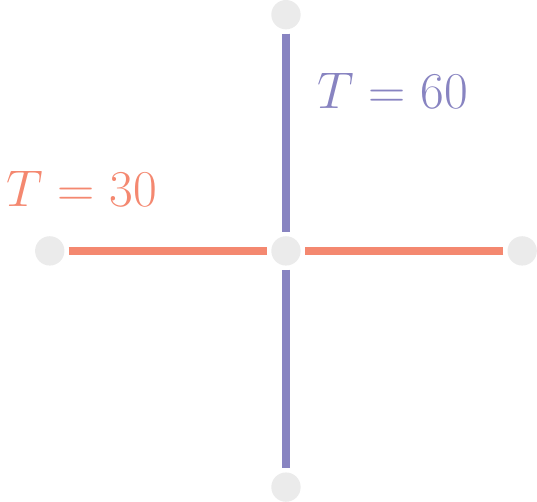}
        \caption{Two lines with different frequencies.}
        \label{fig:hetPespa}
    \end{subfigure}
    \hfill
    \begin{subfigure}[t]{0.34\textwidth}
        \centering
        \includegraphics[width=\linewidth]{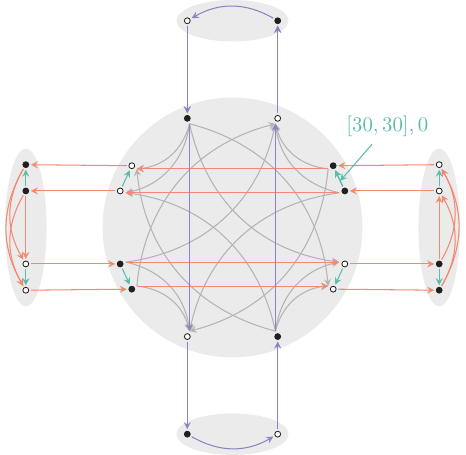}
        \caption{Event-activity network PESP.}
        \label{fig:hetPespb}
    \end{subfigure}
     \hfill
    \begin{subfigure}[t]{0.34\textwidth}
        \centering
        \includegraphics[width=\linewidth]{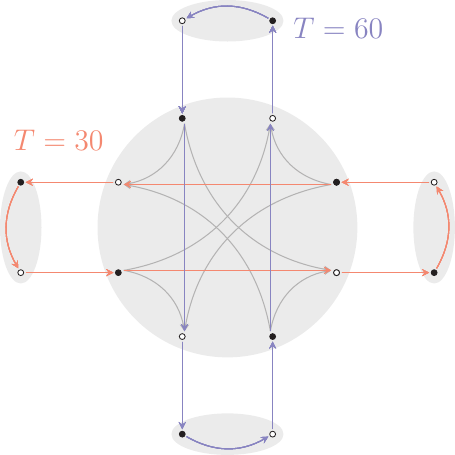}
        \caption{Event-activity network MPESP.}
        \label{fig:hetPespc}
    \end{subfigure}
    \caption{Line network with heterogeneous frequencies and the corresponding event-activity networks for PESP and MPESP.}
    \label{fig:hetPesp}
\end{figure}

A slight variation on MPESP, using individual arc frequencies instead of event frequencies, was introduced in the original PESP paper by \cite{serafini1989mathematical} as the Extended PESP (EPESP). \cite{nachtigall1996periodic} studied MPESP, although using the name EPESP, and proposed a branch-and-bound approach to minimize transfer times in an otherwise unconstrained setting. \cite{siebert2013experimental} used an arc-based formulation for EPESP, but did not report substantial computational gains. For MPESP, \cite{haenelt_taktfahrplanoptimierung_2004} investigated cutting planes and characterized those cycle bases that guarantee the cycle periodicity property by means of the Hermite normal form. On the algorithmic side, \cite{galli_strong_2010} introduced \emph{sharp} spanning trees to construct such cycle bases, gave a procedure to compute sharp trees when a certain divisibility condition on the event periods is satisfied, and achieved running time improvements. Other contributions on periodic timetabling with heterogeneous frequencies revert to the PESP framework with duplicated events and synchronization arcs, incurring a significant computational overhead, see, e.g., \cite{bortoletto_et_al,fuchs2022enhancing,kroon2014flexible,peeters2003cyclic,vanlieshout2021integrated}.

In this paper, we present a formulation for MPESP that significantly improves computational performance. It extends the strongest known formulation for standard PESP—the \textit{cycle formulation}—to the multiperiodic setting. 
To this end, we develop algorithms to construct \emph{sharp} spanning trees in both \textit{harmonic} and \textit{rooted} instances, and prove that any MPESP instance can be transformed into a rooted one by adding at most one auxiliary event and a limited number of arcs. Since periodic tensions are characterized by the cycle periodicity property for fundamental cycle bases of sharp spanning trees,
we establish the correctness of our approach, providing a cycle formulation for arbitrary MPESP instances.
We further show that in the multiperiodic setting, sharp spanning trees are exactly those spanning trees that allow for timetables being reconstructed from a tension by a tree traversal.
In general, for any MPESP instance, including those that do not admit a sharp spanning tree, we prove that the cycle periodicity property for all cycles naturally characterizes periodic tensions in the multiperiodic case, which has been an open question so far. The proof is constructive and allows to compute a timetable from a tension by solving systems of simultaneous congruences.


The proposed formulation is evaluated on two artificial and five real-world public transport networks, including the Swiss railway network, the Athens metro, and the full public transport system of Stuttgart. The cycle formulation solves all but the two largest instances to optimality within an average runtime of 15 seconds. In contrast, the arc formulation typically fails to close the gap within a one-hour time limit. On the largest instances, the cycle formulation yields smaller optimality gaps, while the arc formulation occasionally produces slightly better primal solutions. In the TimPassLib benchmark suite for integrated timetabling and passenger routing, a simple heuristic based on alternating timetabling and routing phases yields four new best-known solutions.

Beyond its strong computational performance, this paper also illustrates that MPESP naturally accommodates key features of real-world public transport systems, such as transfer and headway requirements, partial line synchronization, and passenger routing—without the need for duplicating high-frequency events. However, MPESP's compactness comes with a caveat: while it accurately captures passenger travel times in most cases, it may underestimate travel time for passengers making multiple transfers. Nevertheless, the numerical results indicate that the reduction in model size and complexity outweighs this limitation, making MPESP a practically effective tool for timetable design.

The remainder of this paper is structured as follows. Section~\ref{sec:back} describes background related to PESP. Section~\ref{sec:cform} recalls sharp spanning trees and presents the cycle formulation for MPESP. Section~\ref{sec:nice} develops an algorithm to find sharp spanning trees for any MPESP instance. Our multiperiodic version of the cycle periodicity property is proven in Section~\ref{sec:gcd}. Section~\ref{sec:ptt} highlights the modeling expressiveness of MPESP for public transport timetabling. Section~\ref{sec:cpu} presents the computational results. Finally, Section~\ref{sec:con} concludes the paper. 

\section{Background}
\label{sec:back}

This section summarizes important background for PESP. For more comprehensive treatments, we refer to, e.g., the monographs \cite{nachtigall_periodic_1998,peeters2003cyclic,liebchen_periodic_2006}, \cite{liebchen2008modeling} for modeling considerations, \cite{lindner_analysis_2022} for complexity results, \cite{lindner2025split} for polyhedral analysis, \cite{borndorfer_concurrent_2020} for state-of-the-art implementations, and \cite{liebchen2008first} for a real-world application.

Throughout this paper, we assume that all bounds are integer, which can be achieved by scaling if all bounds are rational. Moreover, we assume without loss of generality that the event-activity network $G$ is connected, and that $0\leq u_a-l_a \leq T-1$. The following mixed-integer program represents the \textit{arc formulation} for PESP: 
\begin{mini!}
	%
	{}
	%
	{ \sum_{a\in A} w_ax_a, \label{eq:objpesp}}
	%
	{\label{formulation:pesp}}
	%
	{}
	%
	\addConstraint
	{~}
	{x_a = \pi_j-\pi_i+Tp_a\quad \label{eq:linkPESP}}
	{ \forall a=(i,j) \in A,}
	\addConstraint
	{l_a\leq ~}
	{ x_a\leq u_a \label{eq:boundspesp}}
	{\forall a\in A,}
    \addConstraint
	{~}
	{\pi_i \in \mathbb{R} \label{eq:pidomain}}
	{\forall i\in V,}
 \addConstraint
	{~}
	{p_a \in \mathbb{Z} \label{eq:pdomain}}
	{\forall a\in A.}
\end{mini!}
In this formulation, the integer variables $p_a$ take the role of the modulo operator. Symmetries can be broken by requiring $0\leq \pi_i \leq T-1$ for all $i\in V$ and setting $\pi_r=0$ for some arbitrary $r\in V$.

A fundamental structural property of PESP is \textit{cycle periodicity}. For any cycle $C$ in $G$, with forward arcs $C^+$ and backward arcs $C^-$, summing Constraints \eqref{eq:linkPESP} yields:
\begin{equation}
\label{eq:cp}
\sum_{a \in C^+} x_a - \sum_{a \in C^-} x_a \equiv 0 \bmod{T}.
\end{equation}
Conversely, if a tension vector $x$ satisfies cycle periodicity \eqref{eq:cp} for all cycles in $G$, then a feasible timetable $\pi$ always exists. Such a timetable can be constructed by traversing a spanning tree of $G$: choosing a root $r \in V$, setting $\pi_r = 0$, and computing $\pi_i$ for $i \in V \setminus \{r\}$ as the path distance modulo $T$ from $r$ in the tree, using  $x$ as arc weights.

The \textit{cycle formulation} for PESP incorporates the condition for cycle periodicity as a constraint, and requires this constraint for a carefully constructed subset $\mathcal{B}$ of the set of all cycles:
\begin{mini!}
	%
	{}
	%
	{ \sum_{a\in A} w_ax_a, \label{eq:objPespCycle}}
	%
	{\label{formulation:tensionPespCycle}}
	%
	{}
	%
	\addConstraint
	{\sum_{a\in C^+}x_a- \sum_{a\in C^-}x_a}
	{= Tz_C \quad \label{eq:cpfPesp}}
	{ \forall C \in \mathcal{B},}
	\addConstraint
	{l_a\leq x_a}
	{\leq u_a \label{eq:boundsPespCycle}}
	{\forall a\in A,}
        \addConstraint
	{z_C}
	{\in \mathbb{Z} \label{eq:boundsQPesp}}
	{\forall C \in \mathcal{B}.}
\end{mini!}
This formulation is valid if $\mathcal{B}$ is an \textit{integral cycle basis}, so that every cycle in $G$ is an integer linear combination of the basis cycles \citep{liebchen_integral_2009}. Formally, let $\gamma_C \in \{0,\pm1\}^A$ denote the vector that encodes the cycle $C$, where $\gamma_{C,a}=0$ if $a\notin C$ and $\gamma_{C,a}=\pm1$ if $a\in C$, the sign indicating whether $a$ is a forward or backward arc in $C$. Then, the set $\{\gamma_1,...,\gamma_k\}$ is an \textit{integral cycle basis} if and only if for all cycles $C$ in $G$ there exist integer coefficients $\lambda_j^C \in \mathbb Z$ such that $\gamma_C = \sum_{j=1}^k \lambda_j^C \gamma_j$.

An often used method for finding an integral cycle basis is to choose some spanning tree $H$ of $G$, and to let $\mathcal{B}$ be the set of cycles that are \textit{fundamental} with respect to $H$, i.e., all cycles that can be obtained by adding some co-tree arc $a\notin H$ to $H$. Such a cycle basis is called \textit{strictly fundamental} and is guaranteed to be integral \citep{kavitha_cycle_2009}.

The cycle formulation can be strengthened by adding Odijk's cycle cuts $a_C \leq z_C \leq b_C$, where 
\begin{equation}
    \label{ref:odijk}
    a_C=\left\lceil\frac{\sum_{a\in C^+}l_a- \sum_{a\in C^-}u_a}{T}\right\rceil \text{ and } b_C=\left\lfloor\frac{\sum_{a\in C^+}u_a- \sum_{a\in C^-}l_a}{T}\right\rfloor,
\end{equation}
which can also be seen to be (rank-1) Chvátal-Gomory cuts \citep{odijk1996constraint,lindner2025split}. The product $\prod_{C \in \mathcal{B}}\left(b_C-a_C\right)$, also known as the \textit{width} of $\mathcal{B}$, represents the number of values that $z$ can take, which motivates selecting $\mathcal{B}$ such that the width is small \citep{liebchen_integral_2009}. The complexity of finding a cycle basis that minimizes the width is still open. Therefore, a common heuristic is to use the cycle basis induced by a minimum spanning tree that is minimal with respect to arc weights $u_a-l_a$. 

Empirically, the cycle formulation shows better computational performance than the arc formulation, which is why it is the current method of choice for solving large-scale timetabling problems. 
\section{A Cycle Formulation for MPESP}
\label{sec:cform}
In this section, we develop the cycle formulation for MPESP. Recall that MPESP introduces a vector $\boldsymbol{T}\in \mathbb{N}^V$, which indicates the period time of each event. Define $T_a := \gcd\{T_i,T_j\}$ for all $a=(i,j) \in A$. A straightforward extension of Formulation~(\ref{formulation:pesp}) provides the arc formulation of MPESP: 
\begin{mini!}
	%
	{}
	%
	{ \sum_{a\in A} w_ax_a, \label{eq:objmpesp}}
	%
	{\label{formulation:MPpesp}}
	%
	{}
	%
	\addConstraint
	{~}
	{x_a = \pi_j-\pi_i+T_ap_a\quad \label{eq:link}}
	{ \forall a=(i,j) \in A,}
	\addConstraint
	{l_a\leq ~}
	{ x_a\leq u_a \label{eq:bounds-mpesp}}
	{\forall a\in A,}
    \addConstraint
	{~}
	{\pi_i \in \mathbb{R} \label{eq:pidomain-mpesp}}
	{\forall i\in V,}
 \addConstraint
	{~}
	{p_a \in \mathbb{Z} \label{eq:pdomain-mpesp}}
	{\forall a\in A.}
\end{mini!}

Shifting the attention to cycles, define $T_C := \gcd\left\{T_a | a \in C\right\}$ for cycles $C$ in $G$. The following is the MPESP equivalent of the cycle periodicity property:

\begin{definition}
    A vector $x \in \mathbb R^A$ satisfies the $\boldsymbol{T}$-\emph{cycle periodicity property} if for all cycles $C$ in $G$ holds
     \begin{equation}
    \label{eq:mpesp-cycle-periodicity}
     \sum_{a\in C^+}x_a-\sum_{a\in C^-}x_a \equiv 0 \bmod T_C,
 \end{equation}
 or more concisely $\gamma_C^\top x \equiv 0 \bmod T_C$.
\end{definition}
 Summing Constraint~(\ref{eq:link}) for all arcs along a cycle, we obtain the following.
\begin{observation}
    \label{obs:cycle-periodicity}
    Any periodic tension $x$ for MPESP satisfies $\gamma_C^\top x \equiv  0 \bmod T_C$ for all cycles $C$ in $G$.
\end{observation}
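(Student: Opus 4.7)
The plan is to unfold the definition of a periodic tension and then reduce modulo $T_C$ before summing around the cycle. Let $x$ be a periodic tension, so by (MP1) there exists $\pi \in \mathbb{R}^V$ and integers $p_a$ with $x_a = \pi_j - \pi_i + T_a p_a$ for every arc $a = (i,j) \in A$. Equivalently, $x_a \equiv \pi_j - \pi_i \bmod T_a$.

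The key observation I will exploit is that by definition $T_C = \gcd\{T_a : a \in C\}$ divides each $T_a$ for $a \in C$. Hence any congruence modulo $T_a$ holding for an arc of $C$ also holds modulo $T_C$, so
\[ x_a \equiv \pi_j - \pi_i \bmod T_C \quad \text{for every } a = (i,j) \in C. \]
Multiplying this congruence by $+1$ for $a \in C^+$ and by $-1$ for $a \in C^-$ and summing along $C$ then gives
\[ \sum_{a \in C^+} x_a - \sum_{a \in C^-} x_a \;\equiv\; \sum_{a \in C} \pm (\pi_j - \pi_i) \;\equiv\; 0 \bmod T_C, \]
where the right-hand sum vanishes because in a cycle every vertex appears equally often as head and tail of the signed arcs, so the $\pi$-terms telescope.

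There is no real obstacle here: the only subtle point is ensuring that reducing the congruences from moduli $T_a$ to the common modulus $T_C$ is legitimate, which is exactly the divisibility $T_C \mid T_a$ built into the definition of $T_C$. The telescoping argument is the standard cycle-sum identity used in the PESP setting, and transfers verbatim once the common modulus has been established.
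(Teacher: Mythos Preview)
Your proof is correct and matches the paper's own argument: the paper simply says to sum Constraint~\eqref{eq:link} along the cycle, after which the $\pi$-terms telescope and the remaining integer combination of the $T_a$'s is divisible by $T_C$. Your version just reduces modulo $T_C$ before summing rather than after, which is the same idea.
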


We will show later in Section~\ref{sec:gcd} that the converse is true as well: If a vector $x$ satisfies the $\boldsymbol{T}$-cycle periodicity property, then $x$ is a tension, i.e., there is a timetable $\pi \in \mathbb R^V$ such that (MP\ref{item:mp}) holds.

However, to develop a useful cycle formulation, we need to single out cycle bases that imply the $\boldsymbol{T}$-cycle periodicity property. In contrary to PESP, in the MPESP it is no longer true that a tension $x$ that satisfies $\boldsymbol{T}$-cycle periodicity can be transformed into a feasible timetable $\pi$ by a graph traversal along an arbitrary spanning tree of $G$. However, \textit{certain} trees do produce feasible timetables. 


This is illustrated in Figure~\ref{fig:traversals}. The instance contains three cycles, all of which satisfying periodicity in the tension depicted in Figures~\ref{fig:traversalsa} and ~\ref{fig:traversalsb}, e.g., the tension along the cycle $1 \rightarrow 2 \rightarrow 3 \rightarrow 1$ is 10, an integer multiple of the greatest common divisor of all periods associated to the cycle. The timetable obtained using the tree in Figure~\ref{fig:traversalsa} is infeasible since $\pi_1-\pi_4 \not\equiv x_{41} \bmod T_{41}$. Conversely, the timetable produced by the tree in Figure~\ref{fig:traversalsb} is feasible.

\begin{figure}[h]
    \centering
    \begin{subfigure}[t]{0.42\textwidth}
        \centering
        \includegraphics[width=\linewidth]{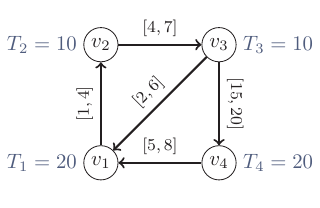}
        \caption{Instance.}
        \label{fig:traversalsInst}
    \end{subfigure}
    \hfill
    \begin{subfigure}[t]{0.25\textwidth}
        \centering
        \includegraphics[width=\linewidth]{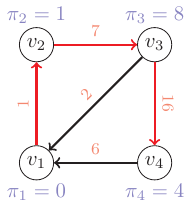}
        \caption{Infeasible, see arc (4,1).}
        \label{fig:traversalsa}
    \end{subfigure}
    \hfill
    \begin{subfigure}[t]{0.26\textwidth}
        \centering
        \includegraphics[width=\linewidth]{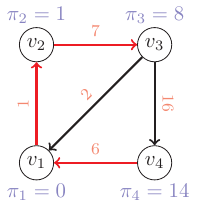}
        \caption{Feasible.}
        \label{fig:traversalsb}
    \end{subfigure}
    \caption{An instance (a) and periodic tension (b, c) for which the tree (red arcs) in (b) yields an infeasible timetable, while the tree in (c) yields a feasible one.}
    \label{fig:traversals}
\end{figure}

The following defines spanning trees whose traversal results in feasible timetables:

\begin{definition}[\citealp{galli_strong_2010}]
     A spanning tree $H$ of $G$ is 
     \emph{sharp}
     if for every co-tree arc $a\notin H$, the cycle $C$ created by adding $a$ to $H$ satisfies $T_C = T_a$.
\end{definition}

Note that in the uniform-period case every spanning tree is sharp.
Before we discuss how to find sharp spanning trees in Section~\ref{sec:nice}, we recall the following:

\begin{theorem}[\citealp{galli_strong_2010}]
    \label{thm:galli-stiller}
    Suppose $\mathcal{B}$ is the fundamental cycle basis induced by a sharp spanning tree $H$. Let $x \in \mathbb R^A$. Then the following are equivalent:
    \begin{enumerate}[label={\normalfont(\roman*)}]
        \item The $\boldsymbol{T}$-cycle periodicity property \eqref{eq:mpesp-cycle-periodicity} holds for $x$.
        \item $\gamma_C^\top x \equiv 0 \bmod T_C$ for all $C\in \mathcal{B}$.
        \item The timetable $\pi$ obtained from $x$ by a graph traversal along $H$ is feasible.
        \item There is a timetable $\pi$ satisfying \textnormal{(MP\ref{item:mp})}.
    \end{enumerate}
\end{theorem}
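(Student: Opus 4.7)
The plan is to establish the equivalence by proving the cyclic chain of implications \mbox{(iv) $\Rightarrow$ (i) $\Rightarrow$ (ii) $\Rightarrow$ (iii) $\Rightarrow$ (iv)}. Two of these implications are essentially free: (i) $\Rightarrow$ (ii) is immediate because $\mathcal{B}$ is a subset of all cycles of $G$, and (iii) $\Rightarrow$ (iv) is immediate since (iii) asserts the existence of a concrete feasible $\pi$. The implication (iv) $\Rightarrow$ (i) is exactly Observation~\ref{obs:cycle-periodicity}, obtained by summing the link equation $\pi_j - \pi_i = x_a + T_a p_a$ with signs around any cycle $C$: the telescoping of the $\pi$-terms gives zero, so $\gamma_C^\top x$ equals an integer linear combination of the $T_a$ for $a \in C$, hence is divisible by $T_C = \gcd\{T_a : a \in C\}$.

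The real content is (ii) $\Rightarrow$ (iii), and this is where sharpness of $H$ enters. The plan is to define $\pi$ by the standard tree traversal: fix a root $r \in V$, set $\pi_r := 0$, and for every other vertex $i$ let $\pi_i$ be the signed sum of $x_{a'}$ along the unique $r$-to-$i$ path in $H$, where forward tree arcs contribute $+x_{a'}$ and backward tree arcs contribute $-x_{a'}$. By construction, for any tree arc $a = (i,j) \in H$ we have $\pi_j - \pi_i = x_a$, so (MP\ref{item:mp}) holds for tree arcs regardless of periods. For a co-tree arc $a = (i,j) \notin H$, orient the fundamental cycle $C_a$ so that $a \in C_a^+$. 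Then the remaining arcs of $C_a$ trace out the unique tree path from $j$ back to $i$, and telescoping of $\pi$ along this tree path gives $\gamma_{C_a}^\top x = x_a - (\pi_j - \pi_i)$. By hypothesis (ii), $\gamma_{C_a}^\top x \equiv 0 \bmod T_{C_a}$, and by sharpness $T_{C_a} = T_a$, so $\pi_j - \pi_i \equiv x_a \bmod T_a$. This verifies (MP\ref{item:mp}) for every co-tree arc, proving (iii).

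The main obstacle is this last step: without sharpness, the congruence $\gamma_{C_a}^\top x \equiv 0 \bmod T_{C_a}$ would only yield $\pi_j - \pi_i \equiv x_a \bmod T_{C_a}$, which is weaker than the required $\bmod\, T_a$ whenever $T_{C_a}$ is a proper divisor of $T_a$ — exactly the situation illustrated in Figure~\ref{fig:traversals}. Sharpness is precisely the structural condition that closes this gap by forcing $T_{C_a} = T_a$ for every fundamental cycle, so that basis-level periodicity transfers to the arcwise congruences needed for feasibility. Once this step is in place, the cyclic chain closes, giving the full equivalence.
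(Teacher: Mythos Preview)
Your proof is correct. The paper itself does not supply a proof of this theorem --- it is quoted as a result of \cite{galli_strong_2010} --- so there is no in-paper argument to compare against; your cyclic chain (iv) $\Rightarrow$ (i) $\Rightarrow$ (ii) $\Rightarrow$ (iii) $\Rightarrow$ (iv), with the tree-traversal construction and the use of sharpness to upgrade the modulus from $T_{C_a}$ to $T_a$ on co-tree arcs, is the standard route and is carried out cleanly.
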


A direct implication of the previous theorem is that the following cycle formulation for MPESP is valid, provided that $\mathcal{B}$ is a cycle basis induced by a sharp
spanning tree.
\begin{mini!}
	%
	{}
	%
	{ \sum_{a\in A} w_ax_a, \label{eq:obj}}
	%
	{\label{formulation:tension}}
	%
	{}
	%
	\addConstraint
	{\sum_{a\in C^+}x_a- \sum_{a\in C^-}x_a}
	{= T_Cz_C \quad \label{eq:cpf}}
	{ \forall C \in \mathcal{B},}
	\addConstraint
	{l_a\leq x_a}
	{\leq u_a \label{eq:bounds}}
	{\forall a\in A,}
        \addConstraint
	{z_C}
	{\in \mathbb{Z} \label{eq:boundsQ}}
	{\forall C \in \mathcal{B}.}
\end{mini!}


We remark that Odijk's cycle cuts naturally generalize to MPESP and read $a_C \leq z_C \leq b_C$, where 
\begin{equation}
    \label{ref:odijk-mpesp}
    a_C=\left\lceil\frac{\sum_{a\in C^+}l_a- \sum_{a\in C^-}u_a}{T_C}\right\rceil \text{ and } b_C=\left\lfloor\frac{\sum_{a\in C^+}u_a- \sum_{a\in C^-}l_a}{T_C}\right\rfloor.
\end{equation} 

We conclude this section by contributing a converse of Theorem~\ref{thm:galli-stiller}, showing that sharp spanning trees are exactly those that allow for a tree traversal to reconstruct a timetable:

\begin{theorem}
    Let $H$ be a spanning tree of $G$ with fundamental cycle basis $\mathcal B$. Suppose that for all tensions $x$ satisfying \eqref{formulation:tension} the timetable $\pi$ obtained by traversing $G$ along $H$ is feasible. Then $H$ is sharp.
\end{theorem}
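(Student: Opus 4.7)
The plan is to argue by contraposition: assuming $H$ is not sharp, I construct a tension satisfying the cycle periodicity constraints~\eqref{eq:cpf} whose traversal-generated timetable violates (MP\ref{item:mp}). Unpacking non-sharpness, there exists a co-tree arc $a^* = (i,j) \notin H$ whose fundamental cycle $C^* \in \mathcal B$ satisfies $T_{C^*} \neq T_{a^*}$; since $T_{C^*} = \gcd\{T_a : a \in C^*\}$ and $a^* \in C^*$, the value $T_{C^*}$ is in fact a proper divisor of $T_{a^*}$, so $0 < T_{C^*} < T_{a^*}$.

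The key structural observation is that each fundamental cycle contains a single co-tree arc, namely the one defining it. Setting $\delta \in \mathbb R^A$ with $\delta_{a^*} := T_{C^*}$ and $\delta_a := 0$ for $a \neq a^*$ therefore yields $\gamma_{C}^\top \delta = 0$ for every $C \in \mathcal B \setminus \{C^*\}$ and $\gamma_{C^*}^\top \delta = \pm T_{C^*}$, so $\delta$ trivially satisfies~\eqref{eq:cpf}. Tracing the traversal of $\delta$: since $\delta$ vanishes on $H$, the procedure produces $\pi \equiv 0$, giving $\pi_j - \pi_i = 0$; the strict inequality $0 < T_{C^*} < T_{a^*}$ then implies $\pi_j - \pi_i = 0 \not\equiv T_{C^*} = \delta_{a^*} \bmod T_{a^*}$, so (MP\ref{item:mp}) is violated on $a^*$.

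The main subtlety is accommodating the bounds $l \le x \le u$ of~\eqref{formulation:tension}. Since this result is structurally the converse of Theorem~\ref{thm:galli-stiller}(ii)\,$\Leftrightarrow$\,(iii), which involves no bounds, one natural reading is that ``tension satisfying~\eqref{formulation:tension}'' refers to an $x$ satisfying the cycle constraints only, in which case $\delta$ itself is the desired counterexample. If the bounds must be enforced, I would start from any assumed feasible $x^0$ (the hypothesis being vacuous otherwise, with the traversal of $x^0$ feasible by assumption) and replace $\delta$ by $x^0 \pm \delta$: linearity preserves~\eqref{eq:cpf}, the traversal of $x^0 \pm \delta$ again equals $\pi^0$ since $\delta$ is zero on $H$, and on $a^*$ we obtain $\pi^0_j - \pi^0_i \equiv x^0_{a^*} \not\equiv x^0_{a^*} \pm T_{C^*} \bmod T_{a^*}$. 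The sign is chosen so that $x^0_{a^*} \pm T_{C^*} \in [l_{a^*}, u_{a^*}]$; the routine bound bookkeeping in edge cases where the bounds on $a^*$ are pathologically tight is the main technical wrinkle, but it does not affect the structural content of the argument.
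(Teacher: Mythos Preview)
Your argument is correct and in fact cleaner than the paper's. Both proofs construct a specific $x$ satisfying the cycle constraints~\eqref{eq:cpf} and test feasibility of the traversal on a chosen co-tree arc, but you place the single nonzero entry on the co-tree arc $a^*$ itself, whereas the paper places $T_C$ on a \emph{tree} arc $a^t$ of the fundamental cycle $C$ and then has to compensate every other co-tree arc whose fundamental cycle contains $a^t$ by $\pm T_C$ to keep those cycle constraints satisfied. Your choice makes the traversal trivially $\pi \equiv 0$ and exploits that $a^*$ lies in no other fundamental cycle; the paper's choice forces it to track how $\pm T_C$ propagates along the tree path to the endpoints of $a^c$. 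The logical content is the same, but your perturbation is supported on one arc rather than potentially many. Regarding the bounds~\eqref{eq:bounds}: the paper's own proof ignores them entirely (its $x$ takes values in $\{0, \pm T_C\}$ with no reference to $l, u$), so the reading of ``satisfying~\eqref{formulation:tension}'' as the cycle constraints alone is evidently the intended one; your discussion of shifting by a feasible $x^0$ is additional care beyond what the paper provides.
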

\begin{proof}
Let $a^c = (i, j)$ be a co-tree arc of $H$ and choose a tree arc $a^t$ on the fundamental cycle $C$ of $a^c$. Set
\begin{equation}
     x_a \coloneqq \begin{cases}
        T_C & \text{ if } a = a^t, \\
        -T_C & \text{ if } a \notin H, a \neq a^c,\text{and } a^t \text{ is forward on the fundamental cycle of } a,\\
        T_C & \text{ if } a \notin H, a \neq a^c,\text{and } a^t \text{ is backward on the fundamental cycle of } a,\\
        0 & \text{ otherwise.}
    \end{cases}
\end{equation}
    Then $x$ satisfies \eqref{formulation:tension}, so that we obtain a timetable $\pi$ by graph traversal with 
    $\pm T_C = \pi_j - \pi_i \equiv 0 \bmod T_{a^c}$,
    hence $T_{a^c}$ divides $T_C$. Since $T_C$ always divides $T_{a^c}$, we conclude $T_{a^c} = T_C$.
\end{proof}
\section{Finding Sharp Trees}
\label{sec:nice}
As seen in Section~\ref{sec:cform}, we can use a cycle formulation for MPESP as long as we can find a sharp spanning tree.
However, Figure~\ref{fig:non-sharp} shows a simple MPESP instance that admits no sharp spanning tree: For the single cycle $C$, we have $T_C = 1$, but $T_a > 1$ for all arcs $a$.

\begin{figure}[b]
    \centering
   \includegraphics[width=0.5\textwidth]{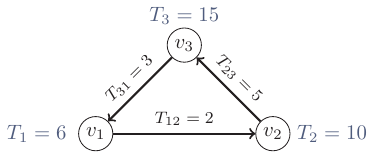}
    \caption{Instance without sharp spanning tree.}
    \label{fig:non-sharp}
\end{figure}

In this section, we  describe two classes of MPESP instances that admit sharp spanning trees: \textit{harmonic} instances and \textit{rooted} instances. In addition, we show how any instance can be transformed into a rooted instance. For defining harmonic instances, let  $\mathcal{T}:=\{T_i | i\in V\}$.

\begin{definition}
    The MPESP instance $(G,\boldsymbol{T},l,u,w)$ is \textit{harmonic} if $\mathcal{T}$ is totally ordered under divisibility; that is, for every $T,T'\in \mathcal{T}$, either $T$ divides $T'$ or $T'$ divides $T$.
\end{definition}

Harmonic MPESP instances are called \emph{nested} by \cite{galli_strong_2010}. An example of a harmonic instance is any instance with periods 15, 30 and 60. Such instances arise commonly in the context of public transport if lines have a frequency of once, twice or four times per hour. Here, it is straightforward to find sharp spanning trees: Sort the arcs according to $T_a$ in non-increasing order, and add arcs to the tree according to this order, skipping arcs that would create cycles. 
This results in a much simpler procedure than the one presented by \cite{galli_strong_2010}.

\begin{theorem}
    If the instance $(G,\boldsymbol{T},l,u,w)$ is harmonic, any maximum weight spanning tree algorithm with respect to arc weights $T_a$ finds a sharp spanning tree. 
\end{theorem}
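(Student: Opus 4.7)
The plan is to reduce the statement to the standard ``maximum-weight spanning tree'' cycle-exchange property of matroids, after observing that on a harmonic instance the divisibility order and the numerical order on the relevant values coincide.

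First I would check a small but essential preparatory fact: in a harmonic instance every arc weight $T_a$ lies in $\mathcal{T}$. Indeed, for $a = (i,j)$ the numbers $T_i$ and $T_j$ are comparable under divisibility, so one divides the other, and hence $T_a = \gcd(T_i,T_j) = \min(T_i,T_j) \in \mathcal{T}$. Consequently, for any two arcs $a,a'$ the values $T_a$ and $T_{a'}$ are comparable under divisibility, and on $\mathcal{T}$ divisibility agrees with the numerical order: $T_a \mid T_{a'}$ iff $T_a \le T_{a'}$. This is the bridge between the numerical comparisons used by a maximum-weight spanning tree algorithm and the divisibility conditions appearing in the definition of sharpness.

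Next I would invoke the classical cycle-exchange characterization of maximum-weight spanning trees in the graphic matroid. Let $H$ be a spanning tree of $G$ that maximizes $\sum_{a \in H} T_a$. Pick any co-tree arc $a^c$ and let $C$ be its fundamental cycle in $H$. For every tree arc $e \in C \setminus \{a^c\}$ the set $(H \setminus \{e\}) \cup \{a^c\}$ is again a spanning tree, so by maximality $T_e \ge T_{a^c}$. By the preparatory observation this numerical inequality is equivalent to $T_{a^c} \mid T_e$. Applying this to every tree arc of $C$, and noting that trivially $T_{a^c} \mid T_{a^c}$, we obtain $T_{a^c} \mid \gcd\{T_e : e \in C\} = T_C$.

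Finally, since $a^c \in C$, the definition of $T_C$ gives $T_C \mid T_{a^c}$. Combined with the previous step this forces $T_C = T_{a^c}$, which is exactly the sharpness condition for the co-tree arc $a^c$. Since $a^c$ was arbitrary, $H$ is sharp. I expect the only subtle point to be the verification that the matroid cycle-exchange argument remains applicable with ties: equal weights produce multiple maximum-weight spanning trees, but any one of them still satisfies $T_e \ge T_{a^c}$ for all $e \in C$, which is all that is needed here. No genuine difficulty arises, because once the divisibility--numerical order alignment of step one is in place, everything reduces to Kruskal-type reasoning.
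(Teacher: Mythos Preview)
Your proof is correct and follows essentially the same approach as the paper: both use the cycle-exchange optimality criterion for maximum-weight spanning trees to conclude that no tree arc on a fundamental cycle can have weight strictly smaller than the co-tree arc, and then derive sharpness from this. Your write-up is more explicit about the key role of harmonicity---namely that all $T_a$ lie in $\mathcal{T}$ and hence numerical order coincides with divisibility---whereas the paper's short contradiction argument leaves this alignment implicit.
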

\begin{proof}
    Let $H$ denote the spanning tree resulting from a maximum spanning tree algorithm, and suppose instead that $T_a>T_C$ for some co-tree arc $a \notin H$, and $C$ is the fundamental cycle of $a$. This implies that $T_a$ has higher weight than any of the arcs on the path between the endpoints of $a$ over $H$, violating the optimality of $H$. 
\end{proof}

For non-harmonic instances, it is required to impose additional restrictions on their structure to be able to always find sharp spanning trees. For $T\in \mathcal{T}$, let $G^T=\left(V^T,A^T\right)$ denote the subgraph induced by all nodes $i\in V$ such that $T_i = T$ and let $G_k^T$ denote its connected components for $k=1,...,m_T$.  Define $L:=\text{lcm}\left\{T_i| i\in V\right\}$. Moreover, it is useful to consider the (undirected) quotient graph $\mathcal{G}=(\mathcal{V},\mathcal{E})$, where each node $G_k^T\in \mathcal{V}$ represents a connected component of $G^T$, and the edge set $\mathcal{E}$ contains an edge between $G_k^T$ and $G_{k'}^{T'}$ if and only if there exists at least one arc $(i, j) \in A$ with one endpoint in $G_k^T$ and the other endpoint in $G_{k'}^{T'}$. Figure~\ref{fig:quotEx} presents an example of an instance and the associated quotient graph. 

\begin{figure}[ht]
    \centering
    \begin{minipage}{0.53\textwidth}
        \centering
        \includegraphics[width=\linewidth]{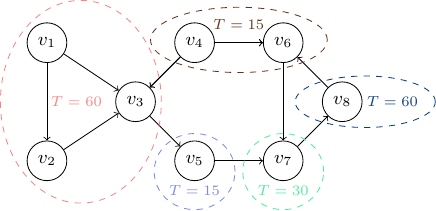}
        \subcaption{Instance.}
                    \label{fig:quotExa}

    \end{minipage}
    \begin{minipage}{0.45\textwidth}
        \centering
        \includegraphics[width=0.67\linewidth]{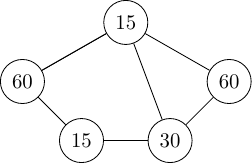}
        \subcaption{Quotient graph.}
                    \label{fig:quotExb}

    \end{minipage}
    \caption{Transformation from instance to quotient graph (periods indicated in nodes).}
    \label{fig:quotEx}
\end{figure}
\begin{definition}
    The MPESP instance $(G,\boldsymbol{T},l,u,w)$ is \textit{rooted} if  the following conditions hold: 
    \begin{itemize}
        \item[(i)] $L\in \mathcal{T}$,
        \item[(ii)] $G^L$ is connected,
        \item[(iii)] for all components $G_k^T\in \mathcal{V}$ such that $T<L$,  the quotient graph contains an edge $\left(G_k^T, G_{k'}^{Tq}\right)$ for some $k'$ and some $q$, where $q$ is an integer larger than 1.
    \end{itemize} 
\end{definition}
The third condition implies that every component with period $T<L$ is connected to some component with a period that is a multiple of $T$. Figure~\ref{fig:quotients} provides examples of quotient graphs that correspond to rooted and non-rooted instances. The instances corresponding to Figure~\ref{fig:quotientsc}, \ref{fig:quotientsd} and \ref{fig:quotientse} are non-rooted because conditions (i), (ii) and (iii) fail, respectively. 

\begin{figure}[ht]
    \centering
    \begin{minipage}{0.45\textwidth}
        \centering
        \includegraphics[width=0.5\linewidth]{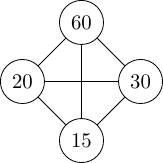}
        \subcaption{Rooted.}
                    \label{fig:quotientsa}

    \end{minipage}
    \begin{minipage}{0.45\textwidth}
        \centering
        \includegraphics[width=0.67\linewidth]{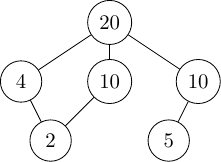}
        \subcaption{Rooted.}
                    \label{fig:quotientsb}

    \end{minipage}
    
    \bigskip
    
    \begin{minipage}{0.3\textwidth}
        \centering
        \includegraphics[width=0.75\linewidth]{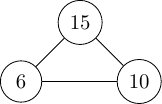}
        \subcaption{Non-rooted.}
            \label{fig:quotientsc}
    \end{minipage}
    \hfill
    \begin{minipage}{0.3\textwidth}
        \centering
        \includegraphics[width=0.6\linewidth]{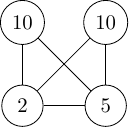}
        \subcaption{Non-rooted.}
            \label{fig:quotientsd}
    \end{minipage}
    \hfill
    \begin{minipage}{0.3\textwidth}
        \centering
        \includegraphics[width=\linewidth]{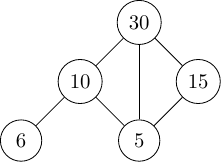}
        \subcaption{Non-rooted.}
            \label{fig:quotientse}
    \end{minipage}

    \caption{Quotient graphs corresponding to rooted and non-rooted instances.}
    \label{fig:quotients}
\end{figure}

The notion of the \textit{leader} of a component in the quotient graph is the final ingredient for finding sharp spanning trees. 
\begin{definition}
    The \textit{leader} of component $G_k^T\in \mathcal{V}$ where $T\neq L$ is the neighbor $G_{k'}^{Tq}$ in $\mathcal{G}$ that minimizes $q$ where $q$ is an integer greater than 1, breaking ties arbitrarily. 
\end{definition}

\begin{figure}[ht]
    \centering
    \begin{minipage}{0.45\textwidth}
        \centering
        \includegraphics[width=0.5\linewidth]{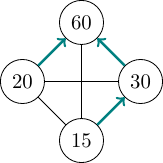}
        \subcaption{Corresponding to Figures~\ref{fig:quotientsa}.}
        \label{fig:leadera}
    \end{minipage}
    \begin{minipage}{0.45\textwidth}
        \centering
        \includegraphics[width=0.67\linewidth]{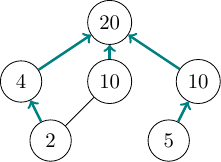}
        \subcaption{Corresponding to Figure~\ref{fig:quotientsb}.}
    \end{minipage}

    \caption{Leader relationships in the quotient graph.}
    \label{fig:leader}
\end{figure}

In other words, the leader of some component $G_{k}^T$ is a neighboring component whose period is the nearest multiple of $T$. Figure~\ref{fig:leader} illustrates the leader relationship. Clearly, the arcs that indicate the leaders form a tree rooted at the least common multiple $L$. This is true in general, since every component has exactly one leader, except $L$, which has no leader. Moreover, this tree resembles a sharp spanning tree: with slight abuse of notation, every co-tree edge $e\in \mathcal{E}$ satisfies $T_e=T_{C_e}$, where $C_e$ is the cycle generated by $e$. This is also true in general, since the path over the tree only visits multiples of one endpoint of the co-tree edge going up the tree, and only multiples of the other endpoint going down, thereby not affecting the greatest common divisor. It is also easy to see using Figures~\ref{fig:quotientsc}, \ref{fig:quotientsd} and \ref{fig:quotientse} why this would fail in non-rooted instances.

This motivates the following algorithm for constructing a sharp spanning tree in the original instance. First, find spanning trees in all components $G_k^T$ of $G$. Then, for every component that is not the least common multiple, add one arc that connects it with its leader. This is illustrated in Figure~\ref{fig:alg}, and formalized in the following theorem.

\begin{figure}[ht]
    \centering
    \begin{minipage}{0.45\textwidth}
        \centering
        \includegraphics[width=0.5\linewidth]{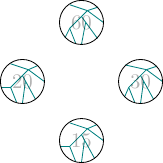}
        \subcaption{Find spanning trees in components.}
        \label{fig:alg1}
    \end{minipage}
    \begin{minipage}{0.45\textwidth}
        \centering
        \includegraphics[width=0.5\linewidth]{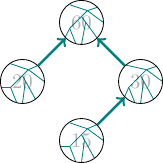}
        \subcaption{Connect with leaders.}
    \end{minipage}

    \caption{Finding a sharp spanning tree in a rooted instance.}
    \label{fig:alg}
\end{figure}

\begin{theorem}
    \label{thm:nice}
    If the instance $(G,\boldsymbol{T},l,u,w)$ is rooted, a spanning tree $H$ of $G$ is sharp if for each arc $(i, j) \in H$ that connects two different components $G_{k}^{T}$ and $G_{k'}^{T'}$ holds that $G_{k'}^{T'}$ is the leader of $G_k^T$, or vice versa.
    In particular, every rooted instance admits a sharp spanning tree.
\end{theorem}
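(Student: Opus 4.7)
The plan is to reduce sharpness to a vertex-level divisibility condition and then exploit the structure of the leader tree. Since $T_C$ always divides $T_a$ whenever $a \in C$, sharpness is equivalent to proving the reverse divisibility $T_a \mid T_C$; and because $T_b = \gcd(T_k, T_\ell)$ for every arc $b = (k,\ell)$, this in turn reduces to showing $T_a \mid T_v$ for every vertex $v$ on the fundamental cycle of $a$.

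Next I unpack the structure imposed by the hypothesis. Conditions (i)--(iii) ensure that the leader relation organizes $\mathcal V$ into a tree rooted at $G^L$, in which each leader edge strictly multiplies the period by an integer factor $q > 1$. A counting argument then shows that the inter-component arcs of $H$ form a spanning tree of the quotient graph $\mathcal G$ consisting of exactly the $|\mathcal V| - 1$ leader edges, one per non-root component; consequently the intra-component part of $H$ restricts to a spanning tree of each $G_k^T$, since the remaining $|V| - |\mathcal V|$ intra-component arcs exactly match the count $\sum_{k,T} (|V_k^T| - 1)$ required for spanning.

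For a co-tree arc $a = (i,j)$ with $i \in G_k^T$ and $j \in G_{k'}^{T'}$, I distinguish two cases. If $G_k^T = G_{k'}^{T'}$, the unique tree path from $j$ to $i$ stays inside that component, so every vertex on the cycle has period $T = T_a$. If the components differ, then because each inter-component edge of $H$ corresponds to a unique leader edge, the projection of the $H$-path to $\mathcal G$ is a simple path in the leader tree from $G_{k'}^{T'}$ to $G_k^T$. This leader path decomposes into an ``up'' segment to the lowest common ancestor, along which every component period is an integer multiple of $T'$, and a ``down'' segment to $G_k^T$, along which every period is an integer multiple of $T$; in both segments each period is therefore divisible by $\gcd(T, T') = T_a$. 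Since vertices inside a component share its period, every vertex on the cycle satisfies the required divisibility, proving sharpness.

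The main conceptual hurdle is the up/down decomposition, which is where the rooted assumption is fully used: (i) and (ii) guarantee a unique root for the leader tree, and (iii) ensures every non-root component has a leader, so the lowest common ancestor argument is well-defined. The ``in particular'' claim then follows by direct construction: take any spanning tree within each (connected) $G_k^T$ and, for every non-root component, add one arbitrary arc to its leader, which exists by (iii). A standard arc count combined with the acyclicity of the leader tree shows that the resulting subgraph is a spanning tree of $G$ satisfying the leader hypothesis, hence sharp by the first part.
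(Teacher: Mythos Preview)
Your strategy coincides with the paper's: reduce sharpness to $T_a \mid T_v$ for every vertex $v$ on the fundamental cycle, then argue that the tree path goes ``up'' (periods are multiples of one endpoint's period) and then ``down'' (multiples of the other's) in the leader tree.

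The gap is your counting argument. You assert that the hypothesis forces exactly $|\mathcal V|-1$ inter-component arcs in $H$, and hence that $H$ restricted to each $G_k^T$ is a spanning tree. This does not follow, and in fact the theorem as literally stated is false. Take a rooted instance with $G_1^{60}=\{a_1,a_2\}$ (connected in $G$ by the arc $(a_1,a_2)$), singleton components $G_1^{30}=\{b\}$ and $G_2^{30}=\{c\}$ both with leader $G_1^{60}$, and arcs $(a_1,b),(a_2,b),(a_1,c)$ in $G$. Then $H=\{(a_1,b),(a_2,b),(a_1,c)\}$ is a spanning tree satisfying the hypothesis---every inter-component arc joins a component to its leader---yet it contains \emph{two} arcs between $G_1^{60}$ and $G_1^{30}$ and no intra-component arc at all. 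For the co-tree arc $(a_1,a_2)$ we have $T_a=60$, but the fundamental cycle passes through $b$, giving $T_C=\gcd(60,60,30)=30\neq T_a$; so $H$ is not sharp. The paper's own proof makes the analogous leap (``at most one arc in $H$ that points to a component $G_{k'}^{T'}$ with $T'>T$'') without justification, and that assertion fails on the same example. Your ``in particular'' construction is fine, because the explicitly built tree \emph{does} have exactly one inter-component arc per non-root component and spans each $G_k^T$; for that tree your up/down divisibility argument is valid, so prove sharpness for it directly rather than by invoking the first part.
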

\begin{proof}

    Consider a co-tree arc $a = (i, j) \notin H$ and the corresponding fundamental cycle $C$. The assumption implies that for each component $G_k^T$, there is at most one arc in $H$ that points to a component $G_{k'}^{T'}$ with $T' > T$, and this $T'$ is divisible by $T$. In particular, for all nodes $k$ on the unique $i$-$j$-path $P$ in $H$ it holds that $T_k$ is a multiple of $T_i$ or of $T_j$. We conclude that $T_C = \gcd\{T_k \mid k \in P\} = \gcd\{T_i, T_j\} = T_a$.
    Finally, note that in a rooted instance the earlier described procedure finds such a spanning tree, hence every rooted instance admits a sharp spanning tree. 
    \end{proof}

Crucially, any non-rooted instance can be transformed to a rooted instance by adding a node for the least common multiple, and/or adding arcs with non-restrictive bounds $[0,T_a-1]$ to ensure sufficient connectivity of the event-activity network, as illustrated in Figure~\ref{fig:trans}.

\begin{corollary}
    Any MPESP instance can be transformed to a rooted instance by adding at most one node and at most $\mathcal{O}(|V|)$ arcs $a$ with bounds $[0, T_a - 1]$.
\end{corollary}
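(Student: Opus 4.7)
The plan is to satisfy the three defining properties of rootedness in sequence, funneling all added arcs through a single ``anchor'' node of period $L$. If $L \in \mathcal{T}$, pick any existing $v^{*} \in V$ with $T_{v^{*}} = L$; otherwise introduce one fresh event $r$ with $T_r = L$ and set $v^{*} := r$. This already takes care of condition~(i) and spends at most one new node, which is our entire node budget. Attaching the node $r$ (when introduced) to the rest of $G$ will happen implicitly via the arcs added in the next steps.

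With $v^{*}$ in hand, I would next enforce condition~(ii) by making $G^L$ connected: if $G^L$ currently consists of components $C_1, \dots, C_{m_L}$ with $m_L \ge 2$, attach $v^{*}$ to one representative node of each other component via an arc with bounds $[0, L-1]$, using at most $m_L - 1 \le |V|$ arcs. If $r$ was added in the previous step then $G^L = \{r\}$ is trivially connected, so nothing is required here. For condition~(iii), iterate over the quotient graph $\mathcal{G}$: for each component $G_k^T$ with $T < L$ that does not already have a neighbor $G_{k'}^{Tq}$ with integer $q > 1$, add a single arc between some node of $G_k^T$ and $v^{*}$ with bounds $[0, T-1]$, noting that the arc's period parameter equals $\gcd(T, L) = T$. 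Since $T$ divides $L$ and $T < L$, the integer $q = L/T$ exceeds $1$, so the new edge of $\mathcal{G}$ is precisely the one required by (iii). The number of such additions is bounded by the number of nodes of $\mathcal{G}$, which is itself at most $|V|$, giving $\mathcal{O}(|V|)$ arcs in total.

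The step that needs the most care, rather than any single calculation, is verifying mutual consistency of the three modifications. Concretely, one must check that (a) the new arcs, all routed through $v^{*}$, do not create fresh quotient components requiring their own (iii)-edge, which holds because $v^{*}$ already sits in the period-$L$ component; (b) connectivity of the event-activity network $G$ is preserved, which follows from anchoring every touched component to $v^{*}$; and (c) the bounds $[0, T_a-1]$ are genuinely non-restrictive, since any periodic timetable $\pi$ for the original instance extends to the enlarged one by taking the tension on each new arc $a = (i,j)$ to be $(\pi_j - \pi_i) \bmod T_a \in [0, T_a - 1]$. Together these observations yield a valid transformation to a rooted instance within the claimed budget of one node and $\mathcal{O}(|V|)$ arcs.
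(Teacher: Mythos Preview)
Your proposal is correct and follows essentially the same construction the paper sketches (and illustrates in Figure~\ref{fig:trans}): introduce a single period-$L$ anchor if needed, then add nonrestrictive arcs to merge the components of $G^L$ and to give each remaining quotient component a leader. The paper does not spell out the counting or the consistency checks you provide, so your write-up is in fact more detailed than the original; the only implicit step worth making explicit is that when a fresh node $r$ is introduced, some maximal $T^* \in \mathcal{T}$ necessarily fails condition~(iii), guaranteeing that $r$ receives at least one arc and hence that $G$ stays connected.
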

\noindent Therefore, any instance of MPESP is solvable through a cycle formulation. 

Finally, note that to find cycle bases of small width, the spanning tree heuristic for PESP can easily be extended to MPESP. For harmonic instances, one can use an adaptation of Kruskal's algorithm that uses a lexicographic ordering of all arcs, first according to their period $T_a$ (descending), then according to their span $u_a-l_a$ (ascending). For rooted instances, the spanning forest $G^T$ for all $T\in \mathcal{T}$ can be found using any minimum spanning forest algorithm with respect to the spans $u_a-l_a$. The tree can then be completed by sorting the remaining arcs according to their spans and iteratively adding the first admissible arc. 

\begin{figure}[ht]
    \centering
    \begin{minipage}{0.3\textwidth}
        \centering
        \includegraphics[width=0.75\linewidth]{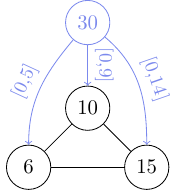}
        \subcaption{Corresponding to Figure~\ref{fig:quotientsc}.}
            \label{fig:transc}
    \end{minipage}
    \hfill
    \begin{minipage}{0.3\textwidth}
        \centering
        \includegraphics[width=0.55\linewidth]{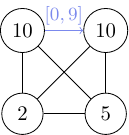}
        \subcaption{Corresponding to Figure~\ref{fig:quotientsd}.}
            \label{fig:transd}
    \end{minipage}
    \hfill
    \begin{minipage}{0.3\textwidth}
        \centering
        \includegraphics[width=\linewidth]{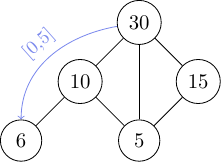}
        \subcaption{Corresponding to Figure~\ref{fig:quotientse}.}
            \label{fig:transe}
    \end{minipage}

    \caption{From non-rooted to rooted instances.}
    \label{fig:trans}
\end{figure}
\section{The Multiperiodic Cycle Periodicity Property}
\label{sec:gcd}

The aim of this section is to prove the following theorem, providing a converse to Observation~\ref{obs:cycle-periodicity}, characterizing periodic tensions as precisely those vectors that satisfy the $\boldsymbol{T}$-cycle periodicity property.

\begin{theorem}
    \label{thm:gcd-conjecture}
    A vector $x \in \mathbb R^A$ satisfies the $\boldsymbol{T}$-cycle periodicity property \eqref{eq:mpesp-cycle-periodicity} if and only if there is a vector $\pi \in \mathbb R^V$ such that for all $a = (i, j) \in A$ holds $\pi_j - \pi_i \equiv x_{a} \bmod T_a$ \textnormal{(MP\ref{item:mp})}.
\end{theorem}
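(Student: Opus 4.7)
Plan: The forward direction is Observation~\ref{obs:cycle-periodicity}: summing (MP\ref{item:mp}) along any cycle $C$ of $G$ yields $\gamma_C^\top x\in T_C\mathbb{Z}$. For the converse I would construct $\pi$ directly, matching the authors' remark about ``solving systems of simultaneous congruences''. Fix a spanning tree $H\subseteq A$ of $G$ and a root $r\in V$. Set $\pi^{(0)}_r=0$ and define $\pi^{(0)}_v$ as the signed sum of $x_a$ along the unique $r$-$v$-tree-path for $v\ne r$. Then $\pi^{(0)}$ satisfies (MP\ref{item:mp}) exactly on every tree arc; on a co-tree arc $a=(i,j)$ the residue $\rho_a:=\pi^{(0)}_j-\pi^{(0)}_i-x_a$ equals the tension of $x$ around the fundamental cycle $C_a$ and thus lies in $T_{C_a}\mathbb{Z}$ by the $\boldsymbol T$-cycle periodicity hypothesis.

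It then remains to produce $\delta\in\mathbb{R}^V$ such that $\pi:=\pi^{(0)}+\delta$ satisfies (MP\ref{item:mp}) everywhere, which translates into the requirements $\delta_j-\delta_i\in T_e\mathbb{Z}$ on every tree arc $e=(i,j)$ and $\delta_j-\delta_i\equiv -\rho_a\bmod T_a$ on every co-tree arc $a=(i,j)$. Parameterizing $\delta$ by integers $(n_e)_{e\in H}$ via $\delta_v=\sum_{e\in P_v}\sigma_{v,e}T_en_e$, where $P_v$ is the $r$-$v$-tree-path and $\sigma_{v,e}\in\{\pm 1\}$ encodes orientation, the co-tree conditions become the integer congruence system
\[
\sum_{e\in C_a\cap H}\tau_{a,e}\,T_e\,n_e\ \equiv\ -\rho_a\ \bmod T_a\qquad(a\in A\setminus H),
\]
with $\tau_{a,e}\in\{\pm 1\}$ the orientations on $C_a$. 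Each individual congruence is solvable: its left-hand side ranges over $\gcd(T_e:e\in C_a\cap H)\,\mathbb{Z}$, and the sum of this subgroup with $T_a\mathbb{Z}$ equals $T_{C_a}\mathbb{Z}$, which contains $-\rho_a$ by cycle periodicity.

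The main obstacle is joint solvability of this system over the integers. I would argue by induction on the number of co-tree arcs: at each step, any partial integer solution $(n_e)$ has enough residual freedom to accommodate the next congruence, provided a compatibility condition holds between the new constraint and all previously satisfied ones. This compatibility amounts to the $\boldsymbol T$-cycle periodicity of $x$ along a suitable integer combination of fundamental cycles—a cycle of $G$ for which the hypothesis directly applies—after which the residual step is a finite linear Diophantine problem that can be closed by the generalized Chinese Remainder Theorem. The delicate bookkeeping needed to make this induction precise, and to track exactly which residual cycles furnish the required compatibility at each stage, is the step I expect the formal proof to spend most effort on. As a conceptual alternative one may instead reduce to Theorem~\ref{thm:galli-stiller} by extending $x$ to the rooted instance supplied by the Corollary to Theorem~\ref{thm:nice} and applying the sharp-tree characterization there, but that route shifts the same consistency burden onto the extension $x\mapsto x'$.
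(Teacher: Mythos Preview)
Your forward direction matches the paper's. For the converse, the paper takes a rather different route. Instead of fixing a spanning tree and solving for integer multipliers $(n_e)_{e\in H}$ by induction on co-tree arcs, the paper first proves the statement for \emph{tournament graphs} by induction on the number of nodes (Lemma~\ref{lem:tournament}): when a new node $k$ is adjoined, one must solve the simultaneous congruences $\pi_k\equiv \pi_i+x_{ik}\bmod\gcd\{T_i,T_k\}$ over all neighbours $i$, and the CRT compatibility condition (Theorem~\ref{thm:crt}) between any two of these is precisely the $\boldsymbol T$-cycle periodicity of the $3$-cycle on $\{i,j,k\}$, which is available because the graph is a tournament. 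A second lemma (Lemma~\ref{lem:subgraph}) then shows that the theorem is inherited under arc deletion: given $x'$ on $G'=(V,A\setminus\{a\})$, one extends to $x$ on $G$ by solving a simultaneous congruence for $x_a$ over all $i$--$j$-paths in $G'$, whose pairwise compatibility again reduces to cycle periodicity of the cycle formed by two such paths. Since every simple graph is a subgraph of a tournament, the theorem follows.

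The gap in your approach is more than bookkeeping. When you attempt to adjoin the $(k{+}1)$-st co-tree congruence, the compatibility you need with the previously satisfied ones is a condition on an integer combination of fundamental cycles, and such a combination is in general \emph{not} a simple cycle of $G$; the hypothesis therefore does not apply to it directly. Decomposing into simple cycles only yields periodicity modulo each individual $T_C$, which need not deliver the congruence modulo the particular $\gcd$ that your Diophantine step requires. The paper's tournament device is exactly what dissolves this obstruction: every pairwise compatibility check becomes a genuine $3$-cycle, so the hypothesis always bites. Your alternative route via the rooted-instance construction hides the same difficulty, since extending $x$ over the auxiliary arcs so that all new cycles remain $\boldsymbol T$-periodic is again a simultaneous congruence over paths---precisely the content of Lemma~\ref{lem:subgraph}.
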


To prove Theorem~\ref{thm:gcd-conjecture}, we need a few preparatory results and recall first a generalized version of the Chinese remainder theorem:

\begin{theorem}[\citealp{ore_general_1952}]
\label{thm:crt}
Let $a_1, \dots, a_n \in \mathbb Z$ and $T_1, \dots, T_n \in \mathbb N$. Then there is a number $\pi \in \mathbb Z$ that satisfies $\pi \equiv a_i \bmod T_i$ simultaneously for all $i \in \{1, \dots, n\}$ if and only if $a_i \equiv a_j \bmod \gcd\{T_i, T_j\}$ for all $i \neq j$.
\end{theorem}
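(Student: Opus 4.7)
The forward direction is Observation~\ref{obs:cycle-periodicity}: summing the arc congruences of $\pi$ around any cycle $C$ yields $\gamma_C^\top x \equiv 0 \bmod T_C$. I focus on the converse, which I would prove constructively by induction on $|A|$.

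The base case $|A| = 0$ is trivial. For the inductive step, I pick an arc $a_0 = (i_0, j_0) \in A$, set $G' := G - a_0$, and note that the restriction of $x$ to $A\setminus\{a_0\}$ still satisfies $\boldsymbol{T}$-cycle periodicity since every cycle of $G'$ is a cycle of $G$. By the induction hypothesis, there is $\pi' \in \mathbb{R}^V$ with $\pi'_j - \pi'_i \equiv x_a \bmod T_a$ for every $a = (i,j) \in A\setminus\{a_0\}$. If $a_0$ is a bridge in $G$, adding a suitable real constant to $\pi'$ on the component of $j_0$ immediately yields the congruence for $a_0$. Otherwise, $G'$ is connected and I need to find a correction $\eta$ in the lattice $K' := \{\eta \in \mathbb{R}^V : \eta_j - \eta_i \in T_a \mathbb{Z} \text{ for all } a = (i,j) \in A\setminus\{a_0\}\}$ such that, with $r_0 := x_{a_0} - (\pi'_{j_0} - \pi'_{i_0})$, the relation $\eta_{j_0} - \eta_{i_0} \equiv r_0 \bmod T_{a_0}$ holds; then $\pi := \pi' + \eta$ is the desired timetable.

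Two ingredients drive the construction of $\eta$. First, for every cycle $C$ in $G$ through $a_0$, summing the $\pi'$-congruences along $C \setminus \{a_0\}$ and invoking $\boldsymbol{T}$-cycle periodicity on $C$ gives $r_0 \equiv 0 \bmod T_C$; hence $r_0$ lies in a specific subgroup of $\mathbb{Z}$, refined by all such cycles. Second, given a spanning tree $H$ of $G'$, Bézout's identity applied to the periods along the unique $H$-path $P$ from $i_0$ to $j_0$ shows that any prescribed integer multiple of $T(P) := \gcd_{a\in P} T_a$ is realisable as $\eta_{j_0} - \eta_{i_0}$ by an $\eta$ supported on the vertices of $P$. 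The generalized Chinese remainder theorem (Theorem~\ref{thm:crt}) bridges the two: the target condition on $\eta$ can be phrased as a system of simultaneous congruences whose pairwise compatibility is precisely an instance of $\boldsymbol{T}$-cycle periodicity in $G$.

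The main obstacle is that when $G'$ is not a tree, the naive path construction must also respect the $K'$-constraints imposed by the co-tree arcs of $H$ inside $G'$, which link the integer parameters along $P$ to those elsewhere. Characterising the full image $\{\eta_{j_0} - \eta_{i_0} : \eta \in K'\} \subseteq \mathbb{Z}$ therefore requires an iterated use of Theorem~\ref{thm:crt}, where each additional pairwise compatibility condition reduces, after summing arc congruences along a shared path and closing it with a relevant arc, to the $\boldsymbol{T}$-cycle periodicity of an appropriate cycle in $G$. Once this image is shown to contain the residue $r_0 \bmod T_{a_0}$, the inductive step closes and yields the desired $\pi$.
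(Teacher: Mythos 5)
Your proposal does not prove the stated theorem. The statement in question is the generalized Chinese remainder theorem of Ore: a purely number-theoretic assertion that the simultaneous congruences $\pi \equiv a_i \bmod T_i$ are solvable if and only if the pairwise compatibility conditions $a_i \equiv a_j \bmod \gcd\{T_i, T_j\}$ hold. There are no graphs, cycles, tensions, or timetables in this statement. What you have sketched instead is a proof of the multiperiodic cycle periodicity property (Theorem~\ref{thm:gcd-conjecture} in the paper), a different result that the paper derives \emph{from} Theorem~\ref{thm:crt} via Lemmas~\ref{lem:tournament} and~\ref{lem:subgraph}. Worse, your argument explicitly invokes ``the generalized Chinese remainder theorem (Theorem~\ref{thm:crt})'' as a key ingredient, so as a proof of Theorem~\ref{thm:crt} it would be circular.

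To prove the actual statement you would need an argument along the following lines. The ``only if'' direction is immediate: if $\pi \equiv a_i \bmod T_i$ and $\pi \equiv a_j \bmod T_j$, then $a_i - a_j = (a_i - \pi) - (a_j - \pi)$ is an integer combination of $T_i$ and $T_j$, hence divisible by $\gcd\{T_i, T_j\}$. For the ``if'' direction, one argues by induction on $n$: merge the first two congruences into a single congruence modulo $\lcm\{T_1, T_2\}$ using B\'ezout's identity (the compatibility condition $a_1 \equiv a_2 \bmod \gcd\{T_1, T_2\}$ is exactly what makes this possible), then verify that the merged congruence remains pairwise compatible with the remaining ones, using the identity $\gcd\{\lcm\{T_1,T_2\}, T_j\} = \lcm\{\gcd\{T_1,T_j\}, \gcd\{T_2,T_j\}\}$. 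This is Ore's constructive argument via the extended Euclidean algorithm, which the paper cites rather than reproves.
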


Ore's proof of Theorem~\ref{thm:crt} immediately generalizes to real numbers, and is constructive: A solution $\pi$ can be computed by essentially applying the extended Euclidean algorithm. We now use the theorem to prove Theorem~\ref{thm:gcd-conjecture} for tournament graphs. A directed graph is a \emph{tournament graph} if there is exactly one arc between any pair of vertices. 

\begin{lemma}
    \label{lem:tournament}
	Let $G = (V, A)$ be a tournament graph and $x \in \mathbb R^A$ such that for all cycles $C$ in $G$ holds $\gamma_C^\top x \equiv 0 \bmod T_C$.
	Then $x$ is a periodic tension.
\end{lemma}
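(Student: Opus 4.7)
I would prove Lemma~\ref{lem:tournament} by induction on $|V|$, building the timetable $\pi$ one vertex at a time, where the key tool is Ore's generalized Chinese Remainder Theorem (Theorem~\ref{thm:crt}) and the only cycles needed are the triangles of the tournament.

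More precisely, enumerate the vertices as $v_1, \dots, v_n$, put $\pi_{v_1} := 0$, and assume inductively that $\pi_{v_1}, \dots, \pi_{v_{k-1}}$ have been defined so that $\pi_{v_j} - \pi_{v_i} \equiv d(v_i, v_j) \bmod \gcd\{T_{v_i}, T_{v_j}\}$ whenever $i, j < k$, where $d(u, v) := x_{a}$ if $a = (u,v) \in A$ and $d(u,v) := -x_a$ if $a = (v, u) \in A$. Since $G$ is a tournament, for each $i < k$ there is exactly one arc $a_i$ between $v_i$ and $v_k$, yielding a target residue $c_i := \pi_{v_i} + d(v_i, v_k)$. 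Setting $\pi_{v_k}$ to satisfy $\pi_{v_k} \equiv c_i \bmod T_{a_i}$ for every $i < k$ will complete the induction, and by Ore's theorem such a $\pi_{v_k}$ exists if and only if $c_i \equiv c_j \bmod \gcd\{T_{a_i}, T_{a_j}\}$ for all $i \neq j$.

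The core of the proof is then to verify this compatibility condition from the $\boldsymbol{T}$-cycle periodicity applied to the triangle $v_i, v_j, v_k$. The induction hypothesis lets me write $c_i - c_j = Q + m T_{a_{ij}}$ for some $m \in \mathbb Z$, where $a_{ij}$ is the arc between $v_i$ and $v_j$ and $Q := -d(v_i, v_j) + d(v_i, v_k) + d(v_k, v_j)$ is precisely the signed sum of $x$ along the triangle. Cycle periodicity then asserts that $Q$ is an integer multiple of $T_C = \gcd\{T_{a_{ij}}, T_{a_i}, T_{a_j}\}$. The key number-theoretic observation is the identity
\[
 \gcd\{T_{a_{ij}}, T_{a_i}, T_{a_j}\} \;=\; \gcd\{T_{v_i}, T_{v_j}, T_{v_k}\} \;=\; \gcd\{T_{a_i}, T_{a_j}\},
\]
which follows by expanding each arc-period as a pairwise gcd of event periods. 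Since $T_C$ divides both $Q$ and $T_{a_{ij}}$, this common divisor also divides $c_i - c_j$, giving exactly the Ore condition.

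The main obstacle, in my view, is not the induction machinery but rather recognizing that the triangles supply enough information: the natural cycle produces a congruence modulo the triangle-gcd $T_C$, while the compatibility condition asks for a congruence modulo the pair-gcd $\gcd\{T_{a_i}, T_{a_j}\}$, which a priori could be strictly larger. The observation that these two gcds coincide once all arc-periods are pairwise gcds of event-periods is what makes the tournament case fall out cleanly, and it is also the reason the argument cannot be literally repeated in a general graph (where the analogue in Section~\ref{sec:gcd} presumably requires reducing to the tournament case via a suitable completion of $G$).
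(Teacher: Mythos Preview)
Your proposal is correct and follows essentially the same approach as the paper: induction on $|V|$, adding one vertex at a time and using Ore's generalized Chinese Remainder Theorem on the triangles to produce the new potential. The only cosmetic differences are that the paper removes a vertex and reverses its outgoing arcs (where you add a vertex and introduce the signed function $d$), and that you make explicit the identity $\gcd\{T_{a_i},T_{a_j}\}=\gcd\{T_{v_i},T_{v_j},T_{v_k}\}$, which the paper uses implicitly when writing the Ore compatibility condition directly modulo $\gcd\{T_i,T_j,T_k\}$.
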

\begin{proof}
	We proceed by induction on the number $n$ of nodes. The situation for $n = 1$ is trivial. For the induction step, fix an arbitrary node $k \in V$. Reversing all outgoing arcs $a \in \delta^+(k)$ and replacing $x_a$ by $-x_a$, we can assume without loss of generality that $\delta^+(k) = \emptyset$. The subgraph on $V \setminus \{k\}$ is a tournament graph again, so that by induction hypothesis, the restriction of $x$ is a periodic tension for some periodic timetable $\pi$. We now want to construct $\pi_k$ such that
    \begin{equation}
        \label{eq:tournament-pi}
        \pi_k \equiv \pi_i + x_{ik} \mod \gcd\{T_i, T_k\} \quad  \text{ for all } i \in \delta^-(k).
    \end{equation}
	That is, we have to solve a simultaneous congruence, which by Theorem~\ref{thm:crt} has a solution if and only if
    \begin{equation}
        \label{eq:tournament-crt}
	    \pi_i + x_{ik} \equiv \pi_j + x_{jk} \mod \gcd\{T_i, T_j, T_k\} \quad \text{ for all } i, j \in \delta^-(k), i \neq j. 
    \end{equation}
	As $G$ is a tournament, the nodes $i$, $j$, $k$ constitute a cycle $C$, so that
    \begin{equation}
	x_{ik} \equiv x_{ij} + x_{jk}  \mod T_C = \gcd\{T_i, T_j, T_k\} \quad \text{ or }\quad
    x_{ik} \equiv -x_{ji} + x_{jk}  \mod T_C = \gcd\{T_i, T_j, T_k\},
    \end{equation}
	depending on the orientation of the arc between $i$ and $j$.
	Inserting that $\pi_j - \pi_i \equiv x_{ij} \bmod \gcd\{T_i, T_j\}$ or $\pi_j - \pi_i \equiv -x_{ji} \bmod \gcd\{T_i, T_j\}$ by induction hypothesis,  we find 
    \begin{equation}
	   x_{ik} \equiv \pi_j - \pi_i + x_{jk} \mod \gcd\{T_i, T_j, T_k\},
    \end{equation}
    implying \eqref{eq:tournament-crt} and hence the existence of a $\pi_k$ satisfying \eqref{eq:tournament-pi}.
\end{proof}

\begin{lemma}
    \label{lem:subgraph}
	Let $G = (V, A)$ be a graph and $a \in A$. If Theorem~\ref{thm:gcd-conjecture} holds for $G$, then it holds for the subgraph $G' = (V, A \setminus \{a\})$.
\end{lemma}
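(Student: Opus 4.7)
The plan is to extend $x'$ to a tension $x$ on $G$ that satisfies the $\boldsymbol{T}$-cycle periodicity property on all cycles of $G$, so that the hypothesis that Theorem~\ref{thm:gcd-conjecture} holds for $G$ produces a timetable $\pi$ whose restriction to $A \setminus \{a\}$ is the desired timetable for $G'$. Since $x'$ already satisfies cycle periodicity on every cycle of $G$ that avoids $a$, only the value $x_a$ needs to be chosen appropriately. Writing $a = (p, q)$, if $p$ and $q$ lie in different connected components of $G'$ then no cycle of $G$ uses $a$ and $x_a$ can be set arbitrarily; otherwise every cycle $C$ of $G$ through $a$ yields a congruence of the form $x_a \equiv b_C \bmod T_C$, where $b_C$ is a signed sum of $x'$-values along $C \setminus \{a\}$ with the sign determined by the orientation of $a$ in $C$.

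To solve this system of congruences I invoke Ore's generalised Chinese Remainder Theorem (Theorem~\ref{thm:crt}). Because $T_C \mid T_a$ for every cycle $C$ through $a$, only finitely many moduli occur among the $T_C$, and congruences sharing a modulus may be bundled once pairwise compatibility is verified, so the system is effectively finite. This pairwise compatibility---showing $b_{C_1} \equiv b_{C_2} \bmod \gcd(T_{C_1}, T_{C_2})$ for any two cycles $C_1, C_2$ of $G$ through $a$---is the main step and the chief obstacle. My approach is to combine $C_1 \setminus \{a\}$ and $C_2 \setminus \{a\}$ with signs chosen so that the contributions of the shared arc $a$ cancel; the resulting $1$-chain $E$ in $G'$ is closed, satisfies $\gamma_E^\top x' = b_{C_1} - b_{C_2}$, and decomposes in the cycle space as $\sum_i \varepsilon_i \gamma_{D_i}$ over simple cycles $D_i$ of $G'$ with $\varepsilon_i \in \{\pm 1\}$, each $D_i$ built from arcs of $(C_1 \cup C_2) \setminus \{a\}$. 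By $\boldsymbol{T}$-cycle periodicity for $x'$, every $\gamma_{D_i}^\top x'$ is an integer multiple of $T_{D_i}$, hence of $T^\ast := \gcd\{T_{a'} : a' \in (C_1 \cup C_2) \setminus \{a\}\}$, and $T^\ast$ is itself a multiple of $\gcd(T_{C_1}, T_{C_2}) = \gcd(T_a, T^\ast)$; summing over $i$ delivers the required divisibility.

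With $x_a$ produced by Ore's theorem, the extended vector $x = (x', x_a) \in \mathbb R^A$ satisfies the $\boldsymbol{T}$-cycle periodicity property on all cycles of $G$: cycles avoiding $a$ inherit it from $x'$, and cycles through $a$ satisfy it by construction. Applying the hypothesis that Theorem~\ref{thm:gcd-conjecture} holds for $G$ then yields a timetable $\pi \in \mathbb R^V$ with $\pi_j - \pi_i \equiv x_{a'} \bmod T_{a'}$ for every $a' \in A$, and restricting to $A \setminus \{a\}$ gives the desired timetable for the tension $x'$ on $G'$.
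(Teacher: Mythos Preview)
Your proof is correct and follows the same strategy as the paper: extend $x'$ to $G$ by choosing $x_a$ through Ore's generalised Chinese Remainder Theorem, verify pairwise compatibility of the resulting congruences via the $\boldsymbol{T}$-cycle periodicity of $x'$ on $G'$, and then invoke the hypothesis on $G$. The paper parametrises the congruences by $i$-$j$-paths in $G'$ rather than by cycles through $a$ (equivalent), and where it asserts that $\gamma_P - \gamma_Q$ is ``the incidence vector of some cycle $C'$'', your explicit decomposition of the closed $1$-chain into simple cycles $D_i$ supported on $(C_1 \cup C_2)\setminus\{a\}$ is simply the more careful rendering of that same step.
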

\begin{proof}
    By Observation~\ref{obs:cycle-periodicity}, it suffices to show the ``only if'' direction in Theorem~\ref{thm:gcd-conjecture}. Let $x' \in \mathbb R^{A \setminus \{a\}}$ such that $\gamma_{C'}^\top x \equiv 0 \bmod T_{C'}$
	for all cycles $C'$ in $G'$. If we can extend $x'$ to a vector $x \in \mathbb R^A$ such that $\gamma_C^\top x \equiv 0 \bmod T_C$ for all cycles $C$ with $a \in A(C)$, then $x$ is a periodic tension on $G$, and the corresponding periodic timetable will certify that $x'$ is a periodic tension on $G'$ as well. Let us therefore consider $a = (i, j)$ and look for an $x_a \in \mathbb R$ with
    \begin{equation}
        \label{eq:subgraph-path}
       x_a \equiv \gamma_P^\top x' \mod \gcd\{T_i \mid i \in P\} \quad \text{ for all } i\text{-}j\text{-paths } P \text{ in } G',
    \end{equation}
    where we encode a not necessarily directed path $P$ by an incidence vector $\gamma_P \in \{0, \pm 1\}^A$ in the same manner as for cycles.
    Since each cycle $C$ containing $a$ an be decomposed into an $i$-$j$-path $P$ and the arc $a$, and cycles not containing $a$ are taken care of by the hypothesis, \eqref{eq:subgraph-path} implies 
    \begin{equation}
      \gamma^\top_C x \equiv 0 \bmod \gcd\{T_i \mid i \in P\} = \gcd\{T_i \mid i \in C\} = T_C  \quad \text{ for all cycles } C \text{ in } G.
    \end{equation}
	By Theorem~\ref{thm:crt}, the simultaneous congruence \eqref{eq:subgraph-path} can be solved if and only if
    \begin{equation}
    \label{eq:subgraph-crt}
	\gamma_P^\top x' \equiv \gamma_Q^\top x' \mod \gcd\{T_i \mid i \in P \cup Q\} \quad  \text{ for all } i\text{-}j\text{-paths } P, Q \text{ in } G', P \neq Q.
    \end{equation}
	For any two $i$-$j$-paths $P$ and $Q$, the difference $\gamma_P - \gamma_Q = \gamma_{C'}$ is the incidence vector of some cycle $C'$ in $G'$, so that by assumption
	$$ \gamma_P^\top x' \equiv \gamma_Q^\top x' \mod T_{C'}.$$
	Since any node of $C'$ is a node of $p$ or of $q$, $\gcd\{T_i \mid i \in P \cup Q\}$ must divide $T_{C'}$, and we obtain \eqref{eq:subgraph-crt}.
\end{proof}

\begin{proof}[Proof of Theorem~\ref{thm:gcd-conjecture}]
    It is straightforward to deal with loops, parallel or antiparallel arcs, so that we can assume without loss of generality that the event-activity network $G$ is a subgraph of a tournament graph. We then invoke Lemma~\ref{lem:tournament} and Lemma~\ref{lem:subgraph}.
\end{proof}

The proof of Lemma~\ref{lem:tournament} also yields an incremental procedure for constructing a periodic timetable $\pi$ from a tension $x$, by iteratively adding nodes and solving systems of simultaneous congruences. We first describe such a procedure for \emph{chordal} graphs, followed by a discussion of its extension to general graphs.

A graph is called \emph{chordal} if its nodes admit a perfect elimination ordering $(v_1, \dots, v_n)$, i.e., for all $i \in \{2, \dots, n\}$, $v_i$ and its neighbors $v_j$ with $j < i$ form a clique. Lemma~\ref{lem:tournament} can then be generalized to chordal graphs, using a perfect elimination ordering for the induction. 
We illustrate such a procedure in Figure~\ref{fig:traversals-crt} using the instance of Figure~\ref{fig:traversals}. The event-activity network is chordal, with $(v_1, v_2, v_3, v_4)$ being a perfect elimination ordering. We obtain a timetable $\pi$ as follows:
\begin{equation}
    \label{eq:transversal-crt}
    \begin{aligned}
        & & \pi_1 &\coloneqq 0 \\
        \pi_2 \equiv 0 + 1 \bmod 10  \quad &\rightsquigarrow&  \pi_2 &\coloneqq 1 \\
        \pi_3 \equiv 1 + 7 \bmod 10 \quad \text{and} \quad \pi_3 \equiv 0 - 2 \bmod 10  \quad &\rightsquigarrow&  \pi_3 &\coloneqq 8 \\
        \pi_4 \equiv 8 + 16 \bmod 10 \quad \text{and} \quad \pi_4 \equiv 0 - 6 \bmod 20 \quad &\rightsquigarrow & \pi_4 &\coloneqq 14
    \end{aligned}
\end{equation}

\begin{figure}[h]
\centering
    \begin{subfigure}[t]{0.8\textwidth}
        \begin{center}
    	\begin{tikzpicture}[scale=2]
    		\tikzstyle{v} = [draw, circle, minimum width=20];
    		\tikzstyle{a} = [-stealth, line width=1];
    		\tikzstyle{t} = [midway, above, sloped]
    		\node[v, label={180:$T_1 = 20$}] (1) at (0, 0) {$v_1$};
    		\node[v, label={180:$T_2 = 10$}] (2) at (0, 1) {$v_2$};
    		\node[v, label={0:$T_3 = 10$}] (3) at (1, 1) {$v_3$};
    		\node[v, label={0:$T_4 = 20$}] (4) at (1, 0) {$v_4$};
    		\draw[a] (1) -- node[t] {$1$} (2);
    		\draw[a] (2) -- node[t] {$7$} (3);
    		\draw[a] (3) -- node[t] {$2$} (1);
    		\draw[a] (3) -- node[t] {$16$} (4);
    		\draw[a] (4) -- node[t] {$6$} (1);
    	\end{tikzpicture}
        \end{center}
        \caption{Instance with periodic tension as in Figure~\ref{fig:traversalsa}.}
    \end{subfigure}
    \begin{subfigure}[t]{0.24\textwidth}
        \begin{center}
    	\begin{tikzpicture}[scale=2]
    		\tikzstyle{v} = [draw, circle, minimum width=20];
    		\tikzstyle{a} = [-stealth, line width=1];
    		\tikzstyle{t} = [midway, above, sloped]
    		\node[v, red] (1) at (0, 0) {$0$};
    		\node[v] (2) at (0, 1) {$ $};
    		\node[v] (3) at (1, 1) {$ $};
    		\node[v] (4) at (1, 0) {$ $};
    		\draw[a] (1) -- node[t] {$1$} (2);
    		\draw[a] (2) -- node[t] {$7$} (3);
    		\draw[a] (3) -- node[t] {$2$} (1);
    		\draw[a] (3) -- node[t] {$16$} (4);
    		\draw[a] (4) -- node[t] {$6$} (1);
    	\end{tikzpicture}
        \end{center}
        \caption{Starting at $v_1$.}
    \end{subfigure}
    \begin{subfigure}[t]{0.24\textwidth}
        \begin{center}
    	\begin{tikzpicture}[scale=2]
    		\tikzstyle{v} = [draw, circle, minimum width=20];
    		\tikzstyle{a} = [-stealth, line width=1];
    		\tikzstyle{t} = [midway, above, sloped]
    		\node[v, blue] (1) at (0, 0) {$0$};
    		\node[v, red] (2) at (0, 1) {$1$};
    		\node[v] (3) at (1, 1) {$ $};
    		\node[v] (4) at (1, 0) {$ $};
    		\draw[a, red] (1) -- node[t] {$1$} (2);
    		\draw[a] (2) -- node[t] {$7$} (3);
    		\draw[a] (3) -- node[t] {$2$} (1);
    		\draw[a] (3) -- node[t] {$16$} (4);
    		\draw[a] (4) -- node[t] {$6$} (1);
    	\end{tikzpicture}
        \end{center}
        \caption{Adding $v_2$.}
    \end{subfigure}
    \begin{subfigure}[t]{0.24\textwidth}
        \begin{center}
    	\begin{tikzpicture}[scale=2]
    		\tikzstyle{v} = [draw, circle, minimum width=20];
    		\tikzstyle{a} = [-stealth, line width=1];
    		\tikzstyle{t} = [midway, above, sloped]
    		\node[v, blue] (1) at (0, 0) {$0$};
    		\node[v, blue] (2) at (0, 1) {$1$};
    		\node[v, red] (3) at (1, 1) {$8$};
    		\node[v] (4) at (1, 0) {$ $};
    		\draw[a, blue] (1) -- node[t] {$1$} (2);
    		\draw[a, red] (2) -- node[t] {$7$} (3);
    		\draw[a, red] (3) -- node[t] {$2$} (1);
    		\draw[a] (3) -- node[t] {$16$} (4);
    		\draw[a] (4) -- node[t] {$6$} (1);
    	\end{tikzpicture}
        \end{center}
        \caption{Adding $v_3$.}
    \end{subfigure}
    \begin{subfigure}[t]{0.24\textwidth}
        \begin{center}
    	\begin{tikzpicture}[scale=2]
    		\tikzstyle{v} = [draw, circle, minimum width=20];
    		\tikzstyle{a} = [-stealth, line width=1];
    		\tikzstyle{t} = [midway, above, sloped]
    		\node[v, blue] (1) at (0, 0) {$0$};
    		\node[v, blue] (2) at (0, 1) {$1$};
    		\node[v, blue] (3) at (1, 1) {$8$};
    		\node[v, red] (4) at (1, 0) {$14$};
    		\draw[a, blue] (1) -- node[t] {$1$} (2);
    		\draw[a, blue] (2) -- node[t] {$7$} (3);
    		\draw[a, blue] (3) -- node[t] {$2$} (1);
    		\draw[a, red] (3) -- node[t] {$16$} (4);
    		\draw[a, red] (4) -- node[t] {$6$} (1);
    	\end{tikzpicture}
        \end{center}
        \caption{Adding $v_4$.}
    \end{subfigure}
\caption{Incremental procedure to construct a timetable from a tension \eqref{eq:transversal-crt}.}
\label{fig:traversals-crt}
\end{figure}
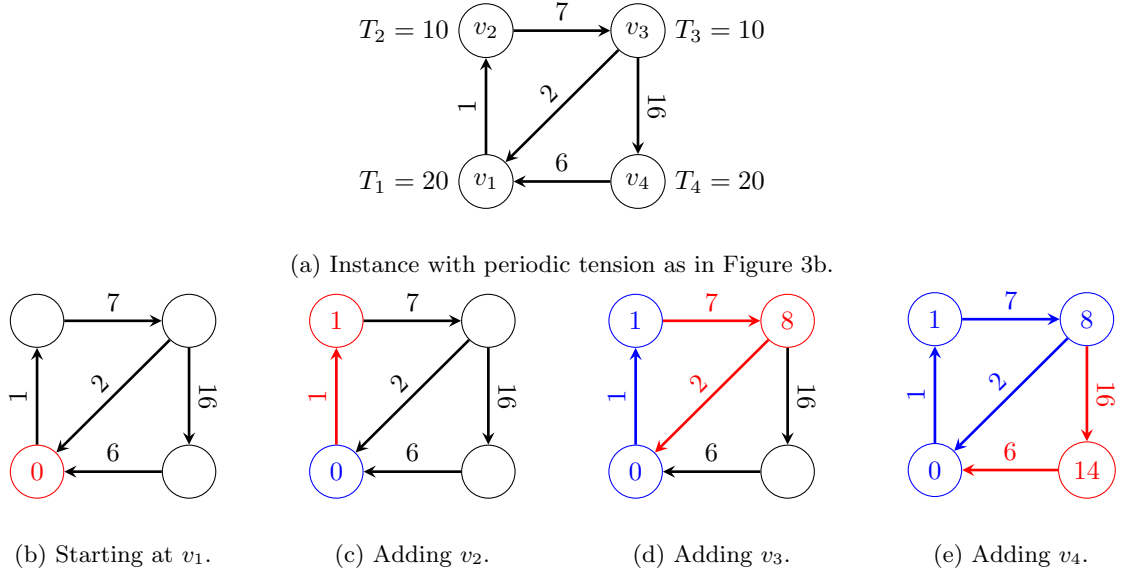

If the event-activity network $G$ is not chordal, then one can first resort to find a chordal completion, e.g., by adding arcs so that $G$ becomes a tournament graph. It is then however necessary to compute the missing tensions for the new arcs, which in the worst case requires to solve a system of an exponential number of simultaneous congruences \eqref{eq:subgraph-path}.

\section{MPESP for Public Transport Timetabling}
\label{sec:ptt}
In this section, we illustrate how MPESP can be used to model constraints that are typical in public transport timetabling. Subsequently, we discuss matters regarding passenger routing. Finally, we describe how to model local line synchronization.

\subsection{Transfer and Headway Constraints}
To illustrate the workings of activities within MPESP, consider an activity $(i,j)$ with $T_i=30$, $T_j=20$ and a tension $x_{ij}=5$. Figure~\ref{fig:slacksa} shows a possible timetable for this situation. In the context of public transport timetabling, $i$ and $j$ correspond to events of lines with frequencies of 2 per hour and 3 per hour, respectively. Figure~\ref{fig:slacksb} presents the same timetable, but in the typical PESP representation, where the global period is $T=\text{lcm}\{20,30\}=60.$

A \textbf{transfer constraint} between $i$ and $j$ with bounds $\tau^-$ and $\tau^+$ can be modeled as an activity $(i,j)$ with bounds $l_{ij}=\tau^-$ and $u_{ij}=\tau^+$. If there is only a lower bound on the transfer time, and total travel time is minimized in the objective, the bounds become $l_{ij}=\tau^-$ and $u_{ij}=\tau^-+T_{ij}-1$. Including this activity for $i$ and $j$ implies that there always exists some $(i_m,j_n)$ in the roll-out of the timetable where $x_{i_m,j_n}=x_{ij}$. This is also visible in Figure~\ref{fig:slacksb}: out of the six total transfer possibilities between the lines corresponding to $i$ and $j$, $(i_1,j_1)$ meets the desired transfer time. 
In case that $\frac{T}{T_i}$ and $\frac{T}{T_j}$ are not coprime, 
there will be multiple transfers with the given transfer time. 

A \textbf{headway constraint} between $i$ and $j$ ensuring a minimum temporal separation of $h$ time units can be modeled as an activity $(i,j)$ with bounds $l_{ij}=h$ and $u_{ij}=T_{ij}-h$. Because $ x_{i_m,j_n}\equiv x_{ij} \bmod T_{ij}$ for all $m$ and $n$, this implies $h\leq x_{i_m,j_n} \leq T-h$, so minimum separation is actually ensured between \textit{all} pairs of events in the PESP representation. Again, this is also apparent from Figure~\ref{fig:slacksb}: assuming a minimum separation of $h\leq 5$ is required, all tensions are between 5 and 55 time units.

\begin{figure}[h]
    \centering
    \begin{minipage}{0.45\textwidth}
        \centering
        \includegraphics[width=0.99\linewidth]{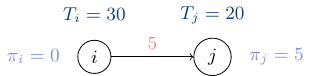}
        \subcaption{MPESP}
        \label{fig:slacksa}
    \end{minipage}
    \begin{minipage}{0.45\textwidth}
        \centering
        \includegraphics[width=0.99\linewidth]{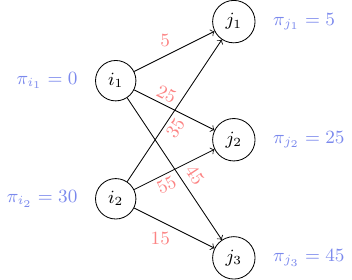}
        \subcaption{PESP}
        \label{fig:slacksb}
    \end{minipage}

    \caption{A timetable and tension represented using MPESP and PESP.}
    \label{fig:slacks}
\end{figure}

\subsection{Passenger Routing}
The objective of (M)PESP in public transport timetabling is typically to minimize passenger travel time. To do so, the weights $w$ are determined by computing a \textit{routing} of all passengers from their origins to destinations over the driving, dwelling and transferring activities with respect to the lower bounds $l$. After a solution $(\pi,x)$ has been found, the true travel time can be computed by routing all passengers according to the tension $x$. 

If the PESP representation is used to compute the routing and optimize the timetable, the objective value given by the PESP formulation always provides an upper bound on the true travel time with respect to the optimized tension $x$. However, this is not true for MPESP, as this approach can underestimate the travel times of passengers with two or more transfers.

This is illustrated in Figure~\ref{fig:nm}. A passenger traveling from station $A$ to station $D$ can either use a direct connection with the orange line, or transfer to the blue line at station $B$, and then transfer back to the orange line at station $C$. Figures~\ref{fig:nm2} and \ref{fig:nm3} depict the same timetable, represented using MPESP and PESP, respectively. According to the MPESP representation, the passenger should transfer twice, resulting in a total travel time of 25. However, the PESP representation shows that it is not possible to have two 5-minute transfers in the same path, such that the actual travel time is 55.

\begin{figure}[h]
    \centering

    \begin{minipage}{0.8\textwidth}
        \centering
        \includegraphics[width=0.3\linewidth]{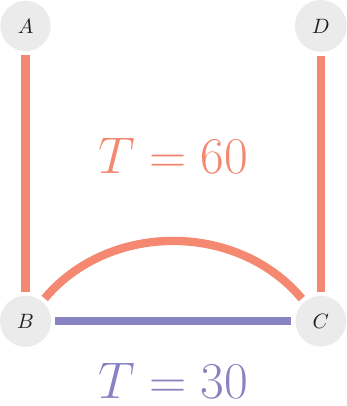}
        \subcaption{Two lines with different frequencies.}
        \label{fig:nm1}
    \end{minipage}

    \vspace{1em} 

    \begin{minipage}{0.49\textwidth}
        \centering
        \includegraphics[width=\linewidth]{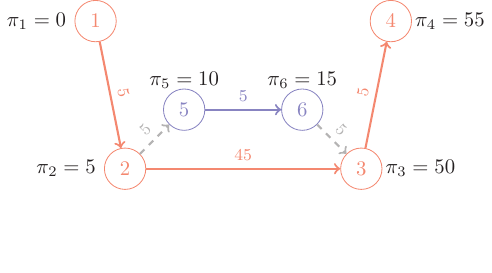}
        \subcaption{MPESP.}
        \label{fig:nm2}
    \end{minipage}
    \hfill
    \begin{minipage}{0.49\textwidth}
        \centering
        \includegraphics[width=\linewidth]{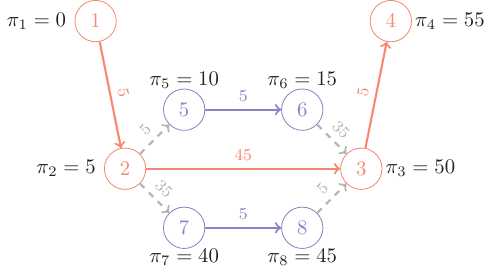}
        \subcaption{PESP.}
        \label{fig:nm3}
    \end{minipage}

    \caption{Example network and timetable where MPESP underestimates travel time for a passenger traveling from station $A$ to station $D$. Transfer arcs are depicted as dashed arcs.}
    \label{fig:nm}
\end{figure}

In real-life networks, the number of passengers with more than one transfer is typically small, such that MPESP is still a very good approximation of travel time while bringing the advantage of a more compact mathematical model. Finally, note that similar to PESP, it is possible to continue to iterate between routing and timetabling with MPESP, until convergence (see, e.g., \citealp{siebert2013experimental}).

\subsection{Local Line Synchronization}

A common feature in public transport networks is that multiple lines share a common segment—often in a city center or densely populated region—before branching off in different directions.  Even when these lines operate at the same frequency, the MPESP representation can still be used to obtain a more compact network representation in these situations. 

Consider Figure~\ref{fig:local}. All lines have a period of $T=60$ minutes, but between stations $A$ and $B$, where two lines are operated, services should depart and arrive exactly every 30 minutes. In the standard PESP, the departure and arrival events are all kept separate, and synchronization activities are added at $B$ for the arrivals towards $A$, and at $A$ for both arrivals and departures.

In contrast, the MPESP representation allows for merging events: the departures at $B$ towards $A$ can be combined, as can both arrivals and departures at $A$. This not only simplifies the event-activity network but also reduces the number of transfer arcs at $B$. Practically, this also makes sense, since passengers traveling from station $E$ or $F$ towards $A$ are (a priori) indifferent between the two lines that run from $B$ to $A$. 

However, to avoid simultaneous arrivals and departures of both lines on the shared segment, a soft synchronization activity must be introduced at station $B$ between the non-synchronized arrival and departure events. The bounds of these soft synchronization activity depend on the span of the dwell activities at $B$: e.g., if the dwell activities have bounds $[2,5]$ (so the span equals 3), the synchronization activities should have bounds $[27,33]$. More generally, if the dwell activities have span $s$, the synchronization activities should have bounds $[30-s,30+s]$.

\begin{figure}[!h]
    \centering
    \begin{subfigure}[t]{0.28\textwidth}
        \centering
        \includegraphics[width=0.8\linewidth]{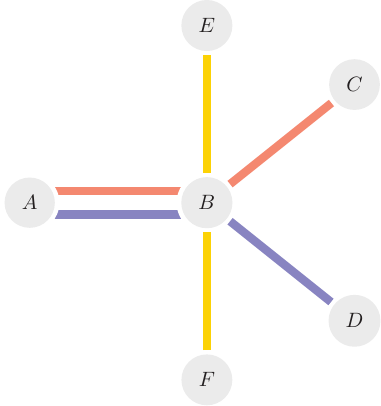}
        \caption{Three lines all running once per hour. Between stations $A$ and $B$, services should depart exactly every 30 minutes.}
        \label{fig:local0}
    \end{subfigure}
    \hfill
    \begin{subfigure}[t]{0.35\textwidth}
        \centering
        \includegraphics[width=\linewidth]{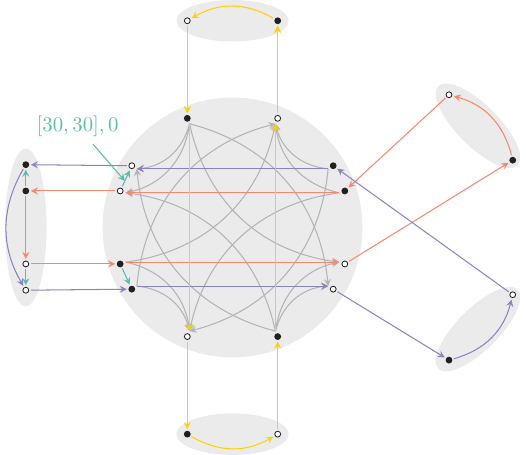}
        \caption{Event-activity network PESP.}
        \label{fig:local1}
    \end{subfigure}
     \hfill
    \begin{subfigure}[t]{0.35\textwidth}
        \centering
        \includegraphics[width=\linewidth]{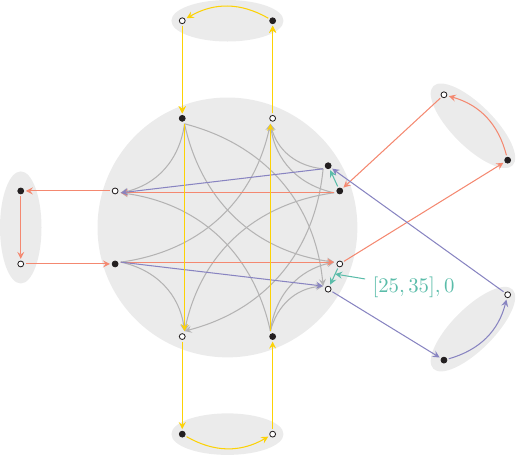}
        \caption{Event-activity network MPESP.}
        \label{fig:local2}
    \end{subfigure}
    \caption{Line network with local synchronization and the corresponding event-activity networks for PESP and MPESP.}
    \label{fig:local}
\end{figure}

\section{Computational Results}
\label{sec:cpu}

This section presents the results of a series of numerical experiments with the new formulation. First, we describe the used instances and computational set-up. Then, we compare the computational performance of the different formulations. Finally, we apply the formulation to solve periodic timetabling problems with integrated routing.

\subsection{Instances and Set-Up}
The instances are derived from the benchmarking library TimPassLib \citep{Schiewe2023TimPassLib}. While this library provides instances for integrated periodic timetabling and passenger routing, they can be transformed into (M)PESP instances by computing a fixed passenger routing with respect to the lower bounds of the activities. Excluding cases where all lines operate at a common frequency, this yields two artificial and five real-world instances, as shown in Table~\ref{tbl:instances}. 

\begin{table}[ht]
\centering
\caption{Instances used in the computational study.}
\label{tbl:instances}
		\begin{tabular}{crrcr}
			\hline
			Instance     & \multicolumn{1}{c}{Stations} & \multicolumn{1}{c}{Lines} & Period set $\mathcal{T}$                           & \multicolumn{1}{c}{lcm($\mathcal{T}$)} \\ \hline
			Toy          & 8                            & 6                         & \{15, 20, 30, 60\}                      & 60                                     \\
			Grid         & 25                           & 8                         & \{20, 30, 60\}                          & 60                                     \\
		 Saxony & 34                           & 8                         & \{30, 60\}                               & 60                                     \\
			Athens       & 51                           & 4                         & \{60, 75, 100\}                         & 300                                    \\
			Erding       & 51                           & 21                        & \{10, 15, 20, 30, 60\}                  & 60                                     \\
			Switzerland  & 140                          & 80                        & \{30, 60, 120\}                           & 120                                    \\
			Stuttgart    & 560                          & 156                       & \{300, 360, 450, 600, 900, 1200, 1800\} & 3600                                   \\ \hline
		\end{tabular}
	\end{table}

Saxony and Switzerland are harmonic instances, since their period sets are $\{30,60\}$ and $\{30,60,120\}$, respectively. According to the principles discussed in Section~\ref{sec:nice}, the remaining instances are converted into rooted instances to be solvable using the cycle formulation. To assess the benefits of MPESP, every MPESP instance is also transformed into a PESP instance, by using the least common multiple of all periods as the global period, and duplicating the nodes and arcs that occur at higher frequencies. To maintain equivalence between MPESP and PESP, the weights of transfer arcs are evenly distributed across their corresponding duplicated arcs.

Table~\ref{tbl:sizes} reports the size of the event-activity networks in all three representations: PESP, MPESP and rooted MPESP. The difference in size between the PESP and MPESP models is substantial, especially for larger instances like Stuttgart, where the arc count is reduced by over 90\%. The additional overhead to root the MPESP instances is minor—only two of the instances require an additional node for the least common multiple, and the increase in arcs is typically below 10. 

    \begin{table}[hb]
    \centering
    \caption{Size of the event-activity networks. NA = not applicable (harmonic).}
    \label{tbl:sizes}
		\begin{tabular}{ccrrrrrrrr}
			\hline
			&  & \multicolumn{2}{c}{PESP}                             & \multicolumn{1}{c}{} & \multicolumn{2}{c}{MPESP}                            & \multicolumn{1}{c}{} & \multicolumn{2}{c}{Rooted MPESP}                       \\ \cline{3-4} \cline{6-7} \cline{9-10} 
			Instance     &  & \multicolumn{1}{c}{Nodes} & \multicolumn{1}{c}{Arcs} & \multicolumn{1}{c}{} & \multicolumn{1}{c}{Nodes} & \multicolumn{1}{c}{Arcs} & \multicolumn{1}{c}{} & \multicolumn{1}{c}{Nodes} & \multicolumn{1}{c}{Arcs} \\ \hline
			Toy          &  & 156                       & 295                      &                      & 64                        & 62                       &                      & 64                        & 69                       \\
			Grid         &  & 392                       & 911                      &                      & 216                       & 301                      &                      & 216                       & 305                      \\
			Saxony &  & 412                       & 792                      &                      & 236                       & 302                      &                      & NA                       & NA                      \\
			Athens       &  & 964                       & 2527                     &                      & 256                       & 316                      &                      & 257                       & 322                      \\
			Erding       &  & 1132                      & 2618                     &                      & 492                       & 599                      &                      & 492                       & 602                      \\
			Switzerland  &  & 2234                      & 7724                     &                      & 1248                      & 2492                     &                      & NA                      & NA                     \\
			Stuttgart    &  & 21328                     & 127384                   &                      & 4696                      & 8295                     &                      & 4697                      & 8330                     \\ \hline
		\end{tabular}
	\end{table}

To generate a broader set of instances with varying complexity, 10 instances are created for each base instance by retaining only the top 10\%, 20\%, ..., up to 100\% of transfer arcs. The arcs are first sorted by weight, so for example, instance \texttt{grid-0.4} contains the 40\% of transfer arcs with the highest weights.

To comprehensively evaluate the effectiveness of the proposed approach, both the arc and cycle formulations are tested using the PESP and (rooted) MPESP representations, resulting in four distinct formulations. Cycle bases are constructed using the extension of the spanning tree heuristic described in Section~\ref{sec:nice}.  All experiments are performed with CPLEX 22.1, utilizing 32 cores of an AMD Rome 7H12 processor, with a maximum computation time of 1 hour per run. The complete source code and all test instances are publicly available at: \href{https://github.com/rn-van-lieshout/MPESP}{\texttt{github.com/rn-van-lieshout/MPESP}}.

\subsection{Comparing the Formulations}
\label{ss:compare}
In this section, we compare the numerical performance of the four formulations. For the small and medium-sized instances, Table~\ref{tbl:smallAndMediumAve} presents the number of optimal solutions, average computation time and average optimality gap of the different formulations. The detailed results of the individual runs are reported in Table~\ref{tbl:detailedSmall} in Appendix~\ref{app:res}. 

\begin{table}[ht]
\caption{Number of optimal solutions, average computation time and average optimality gap of the small and medium-sized instances.}
\label{tbl:smallAndMediumAve}
    \resizebox{\textwidth}{!}{%
\begin{tabular}{crrrrrrrrrrrrrrr}
\hline
             & \multicolumn{7}{c}{PESP}                                                                                                                                                                           & \multicolumn{1}{c}{} & \multicolumn{7}{c}{MPESP}                                                                                                                                                                          \\ \cline{2-8} \cline{10-16} 
             & \multicolumn{3}{c}{Arc-Based}                                                        & \multicolumn{1}{c}{} & \multicolumn{3}{c}{Cycle-Based}                                                      & \multicolumn{1}{c}{} & \multicolumn{3}{c}{Arc-Based}                                                        & \multicolumn{1}{c}{} & \multicolumn{3}{c}{Cycle-Based}                                                      \\ \cline{2-4} \cline{6-8} \cline{10-12} \cline{14-16} 
             Instance & \multicolumn{1}{c}{Opt.} & \multicolumn{1}{c}{T. (s)} & \multicolumn{1}{c}{Gap (\%)} & \multicolumn{1}{c}{} & \multicolumn{1}{c}{Opt.} & \multicolumn{1}{c}{T. (s)} & \multicolumn{1}{c}{Gap (\%)} & \multicolumn{1}{c}{} & \multicolumn{1}{c}{Opt.} & \multicolumn{1}{c}{T. (s)} & \multicolumn{1}{c}{Gap (\%)} & \multicolumn{1}{c}{} & \multicolumn{1}{c}{Opt.} & \multicolumn{1}{c}{T. (s)} & \multicolumn{1}{c}{Gap (\%)} \\ \hline
Toy          & 10/10                       & 245                          & 0.0                   &                      & 10/10                       & 1                            & 0.0                   &                      & 10/10                       & 2                            & 0.0                   &                      & 10/10                       & 0                            & 0.0                   \\
Grid         & 1/10                        & 2597                         & 4.1                   &                      & 4/10                        & 2162                         & 1.3                   &                      & 7/10                        & 1273                         & 0.1                   &                      & 10/10                       & 10                           & 0.0                   \\
Saxony & 5/10                        & 2150                         & 1.4                   &                      & 10/10                       & 3                            & 0.0                   &                      & 10/10                       & 56                           & 0.0                   &                      & 10/10                       & 1                            & 0.0                   \\
Athens       & 0/10                        & 3102                         & 8.3                   &                      & 2/10                        & 2916                         & 3.4                   &                      & 10/10                       & 43                           & 0.0                   &                      & 10/10                       & 5                            & 0.0                   \\
Erding       & 0/10                        & 3605                         & 2.2                   &                      & 1/10                        & 3268                         & 0.9                   &                      & 4/10                        & 2285                         & 0.1                   &                      & 10/10                       & 61                           & 0.0                   \\ \hline
\end{tabular}
}
\end{table}

Comparing the four formulations, it is immediately clear that the cycle formulation for MPESP outperforms all other formulations. This is the only formulation that solves all fifty small and medium-sized instances to optimality within the 1 hour time limit, and even does so in an average computation time of only 15 seconds. The detailed results in the appendix reveal that the other formulations are relatively close to the best formulation on the primal side, but terminate with large optimality gaps due to suffering from weak lower bounds. 

In agreement with the existing literature, the cycle  formulation consistently dominates the arc formulation in its computational performance. However, the choice of instance representation is more influential for the performance than the choice between arc or cycle formulation -- the second best formulation is the arc formulation on the MPESP representation, which only fails to prove optimality on nine of the instances, and has an average optimality gap of only 0.1\%.

Moving to larger instances, Table~\ref{tbl:large1} presents the objective and optimality gap obtained with the four formulations for the instances based on Switzerland and Stuttgart. None of these are solved within the 1 hour time limit, so computation times are not reported. Surprisingly, the superiority of the cycle formulation on the MPESP representation on the small and medium-sized instances does not carry over to these large instances. While this formulation still terminates with the smallest gap on some instances, it fails to even find feasible solutions on others. The arc formulation on MPESP has the most stable performance, but one some instances the cycle formulation on PESP finds a better primal solution.

\begin{table}[ht]
\centering
\caption{Objective values and optimality gaps obtained using the four formulations on the large instances. Only the first five digits (after rounding) of the objective values are shown.}
\label{tbl:large1}
\begin{tabular}{crrrrrrrrccr}
\hline
                         & \multicolumn{5}{c}{PESP}                                                                                                       & \multicolumn{1}{c}{} & \multicolumn{5}{c}{MPESP}                                                                                                     \\ \cline{2-6} \cline{8-12} 
                         & \multicolumn{2}{c}{Arc-Based}                      & \multicolumn{1}{c}{} & \multicolumn{2}{c}{Cycle-Based}                    & \multicolumn{1}{c}{} & \multicolumn{2}{c}{Arc-Based}                      &                      & \multicolumn{2}{c}{Cycle-Based}                     \\ \cline{2-3} \cline{5-6} \cline{8-9} \cline{11-12} 
Instance                 & \multicolumn{1}{c}{Obj.} & \multicolumn{1}{c}{ Gap (\%)} & \multicolumn{1}{c}{} & \multicolumn{1}{c}{Obj.} & \multicolumn{1}{c}{ Gap (\%)} & \multicolumn{1}{c}{} & \multicolumn{1}{c}{Obj.} & \multicolumn{1}{c}{ Gap (\%)} &                      & Obj.                      & \multicolumn{1}{c}{ Gap (\%)} \\ \hline
Switzerland-0.1 & 59787                    & 1.7                   &                      & 59746                    & 1.1                   &                      & 59721                    & 0.4                   & \multicolumn{1}{r}{} & \multicolumn{1}{r}{59687} & 0.1                   \\
Switzerland-0.2 & 62110                    & 4.8                   &                      & 61344                    & 3.0                   &                      & 61590                    & 2.6                   & \multicolumn{1}{r}{} & \multicolumn{1}{r}{61710} & 1.6                   \\
Switzerland-0.3 & 63349                    & 6.2                   &                      & 63479                    & 5.6                   &                      & 62602                    & 3.6                   & \multicolumn{1}{r}{} & \multicolumn{1}{r}{63568} & 3.4                   \\
Switzerland-0.4 & 64018                    & 7.1                   &                      & 63918                    & 5.7                   &                      & 64045                    & 4.9                   & \multicolumn{1}{r}{} & \multicolumn{1}{r}{64163} & 3.8                   \\
Switzerland-0.5 & 64921                    & 8.2                   &                      & 65298                    & 7.4                   &                      & 65070                    & 6.0                   & \multicolumn{1}{r}{} & \multicolumn{1}{r}{64882} & 4.4                   \\
Switzerland-0.6 & 65293                    & 8.6                   &                      & 65179                    & 7.3                   &                      & 65831                    & 6.8                   & \multicolumn{1}{r}{} & \multicolumn{1}{r}{66844} & 7.0                   \\
Switzerland-0.7 & 65624                    & 9.0                   &                      & 66046                    & 8.2                   &                      & 65186                    & 6.0                   & \multicolumn{1}{r}{} & \multicolumn{1}{r}{66765} & 6.9                   \\
Switzerland-0.8 & 65319                    & 8.5                   &                      & 66070                    & 8.0                   &                      & 65306                    & 6.2                   & \multicolumn{1}{r}{} & \multicolumn{1}{r}{66822} & 7.1                   \\
Switzerland-0.9 & 65779                    & 9.2                   &                      & 66802                    & 9.2                   &                      & 65444                    & 6.1                   & \multicolumn{1}{r}{} & -                         & \multicolumn{1}{c}{-}   \\
Switzerland-1.0 & 65773                    & 9.2                   &                      & 66428                    & 8.6                   &                      & 66970                    & 8.2                   & \multicolumn{1}{r}{} & -                         & \multicolumn{1}{c}{-}   \\ \hline
Stuttgart-0.1            & 44103                    & 41.0                  &                      & 43130                    & 18.6                  &                      & 42830                    & 1.5                   &                      & \multicolumn{1}{r}{42830} & 0.3                   \\
Stuttgart-0.2            & 49349                    & 59.1                  &                      & 45845                    & 33.5                  &                      & 45348                    & 3.0                   &                      & \multicolumn{1}{r}{45903} & 3.1                   \\
Stuttgart-0.3            & 49802                    & 71.2                  &                      & 47499                    & 38.4                  &                      & 46897                    & 4.5                   &                      & \multicolumn{1}{r}{47569} & 4.7                   \\
Stuttgart-0.4            & 51978                    & 79.2                  &                      & 48493                    & 43.1                  &                      & 48183                    & 5.6                   &                      & -                         & \multicolumn{1}{c}{-}   \\
Stuttgart-0.5            & 53014                    & 85.8                  &                      & 49802                    & 48.6                  &                      & 48901                    & 6.4                   &                      & -                         & \multicolumn{1}{c}{-}   \\
Stuttgart-0.6            & 53537                    & 90.1                  &                      & 50004                    & 52.9                  &                      & 48630                    & 5.7                   &                      & -                         & \multicolumn{1}{c}{-}   \\
Stuttgart-0.7            & 53272                    & 98.4                  &                      & 50315                    & 55.2                  &                      & 49537                    & 7.3                   &                      & -                         & \multicolumn{1}{c}{-}   \\
Stuttgart-0.8            & 53982                    & 99.3                  &                      & 50588                    & 56.9                  &                      & 48926                    & 6.1                   &                      & -                         & \multicolumn{1}{c}{-}   \\
Stuttgart-0.9            & 54417                    & 109.7                 &                      & 50627                    & 59.9                  &                      & 49407                    & 7.0                   &                      & -                         & \multicolumn{1}{c}{-}   \\
Stuttgart-1.0            & 54399                    & 119.0                 &                      & \multicolumn{1}{c}{-}    & \multicolumn{1}{c}{-}   &                      & 49591                    & 7.4                   &                      & -                         & \multicolumn{1}{c}{-}   \\ \hline
\end{tabular}
\end{table}

Since all transfer arcs in these instances are nonrestrictive (i.e., $u_a-l_a=T_a-1$), the space of feasible timetables is the same for all instances derived from the same network. Therefore, in the next experiment, the solution for instance \texttt{x-0.i} is used as a warm start for instance \texttt{x-0.(i+1)} for \texttt{i}$\,\leq 9$ and the solution for instance \texttt{x-0.9} is used as a warm start for instance \texttt{x-1.0}, always guaranteeing that the formulations find feasible solutions. Table~\ref{tbl:large2} reports the results of the formulations on the large instances with warm start. 

It can be concluded from Table~\ref{tbl:large2} that with warm start, the cycle formulation on the MPESP representation results in, on average, the smallest optimality gaps, again highlighting the strength of this formulation in finding good lower bounds. On the other hand, the the cycle formulation only finds the best primal solutions on the instances with 10\% and 20\% of all transfer arcs. For the other instances, the arc formulation for MPESP results in the best objective values. 

\begin{table}[ht]
\centering
\caption{Objective values and optimality gaps obtained using the four formulations using warm start on the large instances. Only the first five digits (after rounding) of the objective values are shown.}
\label{tbl:large2}
\begin{tabular}{crrrrrrrrrrr}
\hline
                         & \multicolumn{5}{c}{PESP}                                                                                                       & \multicolumn{1}{c}{} & \multicolumn{5}{c}{MPESP}                                                                                                    \\ \cline{2-6} \cline{8-12} 
                         & \multicolumn{2}{c}{Arc-Based}                      & \multicolumn{1}{c}{} & \multicolumn{2}{c}{Cycle-Based}                    & \multicolumn{1}{c}{} & \multicolumn{2}{c}{Arc-Based}                      & \multicolumn{1}{c}{} & \multicolumn{2}{c}{Cycle-Based}                    \\ \cline{2-3} \cline{5-6} \cline{8-9} \cline{11-12} 
Instance                 & \multicolumn{1}{c}{Obj.} & \multicolumn{1}{c}{Gap (\%)} & \multicolumn{1}{c}{} & \multicolumn{1}{c}{Obj.} & \multicolumn{1}{c}{Gap (\%)} & \multicolumn{1}{c}{} & \multicolumn{1}{c}{Obj.} & \multicolumn{1}{c}{Gap (\%)} & \multicolumn{1}{c}{} & \multicolumn{1}{c}{Obj.} & \multicolumn{1}{c}{Gap (\%)} \\ \hline
Switzerland-0.1 & 59865                    & 1.6                   &                      & 59739                    & 1.1                   &                      & 59692                    & 0.4                   &                      & 59688                    & 0.2                   \\
Switzerland-0.2 & 61728                    & 4.1                   &                      & 61539                    & 3.2                   &                      & 61557                    & 2.4                   &                      & 61546                    & 1.4                   \\
Switzerland-0.3 & 63287                    & 6.1                   &                      & 63217                    & 5.0                   &                      & 62869                    & 3.4                   &                      & 62933                    & 2.3                   \\
Switzerland-0.4 & 64193                    & 7.3                   &                      & 64220                    & 6.2                   &                      & 63563                    & 3.9                   &                      & 63765                    & 3.0                   \\
Switzerland-0.5 & 64676                    & 7.9                   &                      & 65335                    & 7.6                   &                      & 64180                    & 4.4                   &                      & 64378                    & 3.7                   \\
Switzerland-0.6 & 65858                    & 9.4                   &                      & 65403                    & 7.4                   &                      & 64586                    & 5.0                   &                      & 64666                    & 4.0                   \\
Switzerland-0.7 & 68202                    & 12.2                  &                      & 65607                    & 7.8                   &                      & 64821                    & 5.0                   &                      & 64946                    & 4.2                   \\
Switzerland-0.8 & 67300                    & 10.9                  &                      & 66097                    & 8.4                   &                      & 64981                    & 5.2                   &                      & 65106                    & 4.7                   \\
Switzerland-0.9 & 67128                    & 11.0                  &                      & 66335                    & 8.7                   &                      & 65061                    & 5.4                   &                      & 65186                    & 4.8                   \\
Switzerland-1.0 & 66145                    & 9.7                   &                      & 66085                    & 8.3                   &                      & 65080                    & 5.2                   &                      & 65201                    & 5.0                   \\
Average                  & 64838                    & 8.0                   &                      & 64358                    & 6.4                   &                      & 63639                    & 4.0                   &                      & 63741                    & 3.3                   \\ \hline
Stuttgart-0.1            & 44553                    & 41.7                  &                      & 43182                    & 18.8                  &                      & 42916                    & 1.8                   &                      & 42854                    & 0.4                   \\
Stuttgart-0.2            & 48469                    & 57.3                  &                      & 45856                    & 30.2                  &                      & 45495                    & 3.4                   &                      & 45468                    & 2.0                   \\
Stuttgart-0.3            & 51270                    & 64.9                  &                      & 47574                    & 39.1                  &                      & 46762                    & 4.1                   &                      & 46874                    & 3.2                   \\
Stuttgart-0.4            & 52037                    & 71.5                  &                      & 48553                    & 43.5                  &                      & 47576                    & 4.3                   &                      & 47725                    & 4.0                   \\
Stuttgart-0.5            & 53278                    & 79.4                  &                      & 49292                    & 46.6                  &                      & 48059                    & 4.6                   &                      & 48233                    & 4.3                   \\
Stuttgart-0.6            & 54264                    & 80.2                  &                      & 49815                    & 52.2                  &                      & 48350                    & 4.9                   &                      & 48474                    & 4.6                   \\
Stuttgart-0.7            & 54137                    & 84.7                  &                      & 50491                    & 55.0                  &                      & 48509                    & 5.0                   &                      & 48634                    & 5.0                   \\
Stuttgart-0.8            & 53634                    & 88.3                  &                      & 50588                    & 57.9                  &                      & 48579                    & 5.3                   &                      & 48709                    & 5.5                   \\
Stuttgart-0.9            & 54097                    & 89.3                  &                      & 50627                    & 60.8                  &                      & 48601                    & 5.3                   &                      & 48734                    & 5.7                   \\
Stuttgart-1.0            & 55184                    & 93.2                  &                      & 50633                    & 62.7                  &                      & 48604                    & 5.1                   &                      & 48737                    & 5.6                   \\
Average                  & 52092                    & 75.1                  &                      & 48661                    & 46.7                  &                      & 47345                    & 4.4                   &                      & 47444                    & 4.0                   \\ \hline
\end{tabular}
\end{table}

\subsection{Timetabling with Integrated Routing}
\label{sss:ttir}
The final experiment uses the MPESP representation to solve benchmark instances for the integrated periodic timetabling and passenger routing problem. In other words, instead of using fixed activity weights as in the previous experiments, the weight of an activity equals the number of passengers that uses that activity in their shortest path from origin to destination based on the tension $x$. The instances are solved using the iterative algorithm described in Section~\ref{sec:ptt}, with the refinement that only the top $k0\%$ transfer arcs are included in the timetabling step, where $k$ starts at 1 and is incremented in every iteration until $k$ equals 10. Since the focus is on finding good primal solutions, the large instances use the arc formulation, while the other instances use the cycle formulation. The Athens instance is omitted as it contains ``imperfect" synchronization constraints, modeling a service with a frequency of 4 in a 150-minute period with activities where $l=u=37$ while $150/4=37.5$. 

Table~\ref{tbl:ttir} reports the average passenger travel time and the gap to a lower bound obtained by computing the shortest path of all passengers with respect to $l$. For the Toy instance, the algorithm replicates the optimal solution found by \cite{SchieweSchoebel2020} using a specialized algorithm for timetabling with integrated routing. For four out of the five remaining instances, new best known solutions are found. These solutions have been independently verified and posted on the TimPassLib website. For Switzerland and Stuttgart, the new solutions improve the previous bests by 1.0\% and 1.4\%, respectively — substantial gains in the context of periodic timetabling.

\begin{table}[ht]
\centering
\caption{The average passenger travel time and gap to the lower bound for timetabling with integrated routing. The asterisk indicates that the found solution is a new best known solution.}
\label{tbl:ttir}
	\begin{tabular}{crr}
		\hline
		Instance     & \multicolumn{1}{c}{Travel Time (min.)} & \multicolumn{1}{c}{Gap (\%)} \\ \hline
		Toy          & 7.29\textcolor{white}{*}                                 & 0.0                   \\
		Grid         & 19.33*                                  & 2.8                   \\
		Saxony & 5.71\textcolor{white}{*}                                   & 3.0                   \\
		Erding       & 21.96*                                  & 0.4                   \\
		Switzerland  & 46.47*                                  & 4.1                   \\
		Stuttgart    & 20.38*                                  & 6.7                   \\ \hline
	\end{tabular}

\end{table}

\section{Conclusion}
\label{sec:con}
This paper developed a cycle formulation for periodic event scheduling problems with heterogeneous event frequencies. The new formulation operates on a more compact network compared to existing approaches, resulting in a stronger mathematical model.
On the theoretical side, we have characterized sharp trees, developed the notion of rooted instance and hence a cycle formulation for arbitrary instances, and proved a multiperiodic version of the cycle periodicity property.
Computational experiments on both artificial and real-life instances confirmed the superior performance of the new formulation. These results provide compelling evidence that implicitly modeling event frequencies—rather than duplicating events and introducing synchronization activities—is the more effective strategy.






\footnotesize
\vspace{10pt}
\textbf{Acknowledgements} Removed. 
\normalsize

\bibliography{refs}

\newpage
\appendix
\section{Detailed Computational Results}
\label{app:res}

\begin{table}[h]
\caption{Detailed results for the small and medium-sized instances. For Saxony, Athens and Erding instances, only the first five digits (after rounding) of the objective values are shown.}
\label{tbl:detailedSmall}
	    \resizebox{\textwidth}{!}{%

\begin{tabular}{crrrrrrrrrrrrrrr}
\hline
                      & \multicolumn{7}{c}{PESP}                                                                                                                                                                     & \multicolumn{1}{c}{} & \multicolumn{7}{c}{MPESP}                                                                                                                                                                  \\ \cline{2-8} \cline{10-16} 
                      & \multicolumn{3}{c}{Arc-Based}                                                     & \multicolumn{1}{c}{} & \multicolumn{3}{c}{Cycle-Based}                                                   & \multicolumn{1}{c}{} & \multicolumn{3}{c}{Arc-Based}                                                     & \multicolumn{1}{c}{} & \multicolumn{3}{c}{Cycle-Based}                                                   \\ \cline{2-4} \cline{6-8} \cline{10-12} \cline{14-16} 
Instance              & \multicolumn{1}{c}{Obj.} & \multicolumn{1}{c}{Time (s)} & \multicolumn{1}{c}{Gap (\%)} & \multicolumn{1}{c}{} & \multicolumn{1}{c}{Obj.} & \multicolumn{1}{c}{Time (s)} & \multicolumn{1}{c}{Gap (\%)} & \multicolumn{1}{c}{} & \multicolumn{1}{c}{Obj.} & \multicolumn{1}{c}{Time (s)} & \multicolumn{1}{c}{Gap (\%)} & \multicolumn{1}{c}{} & \multicolumn{1}{c}{Obj.} & \multicolumn{1}{c}{Time (s)} & \multicolumn{1}{c}{Gap (\%)} \\ \hline
Toy-0.1      & 14758                    & 2                            & 0.0                   &                      & 14758                    & 0                            & 0.0                   &                      & 14758                    & 1                            & 0.0                   &                      & 14758                    & 0                            & 0.0                   \\
Toy-0.2      & 15058                    & 1                            & 0.0                   &                      & 15058                    & 1                            & 0.0                   &                      & 15058                    & 1                            & 0.0                   &                      & 15058                    & 0                            & 0.0                   \\
Toy-0.3      & 15328                    & 1                            & 0.0                   &                      & 15328                    & 0                            & 0.0                   &                      & 15328                    & 2                            & 0.0                   &                      & 15328                    & 0                            & 0.0                   \\
Toy-0.4      & 15598                    & 2                            & 0.0                   &                      & 15598                    & 1                            & 0.0                   &                      & 15598                    & 1                            & 0.0                   &                      & 15598                    & 0                            & 0.0                   \\
Toy-0.5      & 15808                    & 4                            & 0.0                   &                      & 15808                    & 0                            & 0.0                   &                      & 15808                    & 3                            & 0.0                   &                      & 15808                    & 0                            & 0.0                   \\
Toy-0.6      & 16018                    & 24                           & 0.0                   &                      & 16018                    & 1                            & 0.0                   &                      & 16018                    & 0                            & 0.0                   &                      & 16018                    & 0                            & 0.0                   \\
Toy-0.7      & 16207                    & 146                          & 0.0                   &                      & 16207                    & 1                            & 0.0                   &                      & 16207                    & 3                            & 0.0                   &                      & 16207                    & 1                            & 0.0                   \\
Toy-0.8      & 16396                    & 149                          & 0.0                   &                      & 16396                    & 3                            & 0.0                   &                      & 16396                    & 2                            & 0.0                   &                      & 16396                    & 1                            & 0.0                   \\
Toy-0.9      & 16426                    & 1640                         & 0.0                   &                      & 16426                    & 1                            & 0.0                   &                      & 16426                    & 2                            & 0.0                   &                      & 16426                    & 1                            & 0.0                   \\
Toy-1.0      & 16456                    & 477                          & 0.0                   &                      & 16456                    & 2                            & 0.0                   &                      & 16456                    & 1                            & 0.0                   &                      & 16456                    & 0                            & 0.0                   \\ \hline
Grid-0.1        & 43797                    & 81                           & 0.0                   &                      & 43797                    & 1                            & 0.0                   &                      & 43797                    & 1                            & 0.0                   &                      & 43797                    & 0                            & 0.0                   \\
Grid-0.2        & 44389                    & 3601                         & 0.0                   &                      & 44389                    & 3                            & 0.0                   &                      & 44389                    & 2                            & 0.0                   &                      & 44389                    & 0                            & 0.0                   \\
Grid-0.3        & 44958                    & 3625                         & 1.48                  &                      & 44958                    & 50                           & 0.0                   &                      & 44958                    & 18                           & 0.0                   &                      & 44958                    & 1                            & 0.0                   \\
Grid-0.4        & 45358                    & 2411                         & 2.8                   &                      & 45358                    & 483                          & 0.0                   &                      & 45358                    & 40                           & 0.0                   &                      & 45358                    & 1                            & 0.0                   \\
Grid-0.5        & 45847                    & 3623                         & 3.8                   &                      & 45847                    & 3616                         & 0.4                   &                      & 45847                    & 48                           & 0.0                   &                      & 45847                    & 1                            & 0.0                   \\
Grid-0.6        & 46281                    & 2671                         & 5.0                   &                      & 46222                    & 3626                         & 1.0                   &                      & 46222                    & 276                          & 0.0                   &                      & 46222                    & 2                            & 0.0                   \\
Grid-0.7        & 46742                    & 3614                         & 5.8                   &                      & 46779                    & 3641                         & 2.2                   &                      & 46742                    & 1448                         & 0.0                   &                      & 46742                    & 11                           & 0.0                   \\
Grid-0.8        & 47447                    & 1995                         & 7.3                   &                      & 47093                    & 3636                         & 2.8                   &                      & 47047                    & 3634                         & 0.4                   &                      & 47047                    & 20                           & 0.0                   \\
Grid-0.9        & 47404                    & 2631                         & 6.7                   &                      & 47327                    & 3624                         & 3.2                   &                      & 47290                    & 3633                         & 0.5                   &                      & 47290                    & 29                           & 0.0                   \\
Grid-1.0        & 47846                    & 1721                         & 8.0                   &                      & 47505                    & 2941                         & 3.2                   &                      & 47426                    & 3627                         & 0.4                   &                      & 47426                    & 37                           & 0.0                   \\ \hline
Saxony-0.1    & 17498                    & 13                           & 0.0                   &                      & 17498                    & 0                            & 0.0                   &                      & 17498                    & 1                            & 0.0                   &                      & 17498                    & 0                            & 0.0                   \\
Saxony-0.2    & 17873                    & 5                            & 0.0                   &                      & 17873                    & 0                            & 0.0                   &                      & 17873                    & 1                            & 0.0                   &                      & 17873                    & 0                            & 0.0                   \\
Saxony-0.3    & 18695                    & 707                          & 0.0                   &                      & 18695                    & 1                            & 0.0                   &                      & 18695                    & 3                            & 0.0                   &                      & 18695                    & 0                            & 0.0                   \\
Saxony-0.4    & 19105                    & 1071                         & 0.0                   &                      & 19105                    & 1                            & 0.0                   &                      & 19105                    & 5                            & 0.0                   &                      & 19105                    & 0                            & 0.0                   \\
Saxony-0.5    & 19263                    & 3444                         & 0.0                   &                      & 19263                    & 1                            & 0.0                   &                      & 19263                    & 6                            & 0.0                   &                      & 19263                    & 1                            & 0.0                   \\
Saxony-0.6    & 19501                    & 3603                         & 0.7                   &                      & 19501                    & 2                            & 0.0                   &                      & 19501                    & 28                           & 0.0                   &                      & 19501                    & 0                            & 0.0                   \\
Saxony-0.7    & 19644                    & 3624                         & 2.4                   &                      & 19644                    & 4                            & 0.0                   &                      & 19644                    & 68                           & 0.0                   &                      & 19644                    & 1                            & 0.0                   \\
Saxony-0.8    & 19738                    & 3621                         & 2.4                   &                      & 19738                    & 4                            & 0.0                   &                      & 19738                    & 108                          & 0.0                   &                      & 19738                    & 1                            & 0.0                   \\
Saxony-0.9    & 19861                    & 3624                         & 4.2                   &                      & 19861                    & 8                            & 0.0                   &                      & 19861                    & 171                          & 0.0                   &                      & 19861                    & 1                            & 0.0                   \\
Saxony-1.0    & 19891                    & 1782                         & 4.5                   &                      & 19891                    & 8                            & 0.0                   &                      & 19891                    & 169                          & 0.0                   &                      & 19891                    & 2                            & 0.0                   \\ \hline
Athens-0.1       & 23037                    & 1863                         & 3.9                   &                      & 23037                    & 1                            & 0.0                   &                      & 23037                    & 1                            & 0.0                   &                      & 23037                    & 1                            & 0.0                     \\
Athens-0.2       & 23361                    & 2082                         & 5.3                   &                      & 23361                    & 19                           & 0.0                   &                      & 23361                    & 3                            & 0.0                   &                      & 23361                    & 0                            & 0.0                   \\
Athens-0.3       & 23589                    & 3608                         & 7.1                   &                      & 23589                    & 3632                         & 2.4                   &                      & 23589                    & 23                           & 0.0                   &                      & 23589                    & 0                            & 0.0                   \\
Athens-0.4       & 23757                    & 3609                         & 8.3                   &                      & 23757                    & 3630                         & 3.7                   &                      & 23757                    & 32                           & 0.0                   &                      & 23757                    & 1                            & 0.0                   \\
Athens-0.5       & 23847                    & 2679                         & 9.3                   &                      & 23847                    & 3669                         & 4.1                   &                      & 23847                    & 23                           & 0.0                   &                      & 23847                    & 1                            & 0.0                   \\
Athens-0.6       & 24292                    & 3060                         & 10.2                  &                      & 23904                    & 3643                         & 4.7                   &                      & 23904                    & 24                           & 0.0                   &                      & 23904                    & 1                            & 0.0                   \\
Athens-0.7       & 23948                    & 3608                         & 9.5                   &                      & 23937                    & 3653                         & 4.6                   &                      & 23937                    & 33                           & 0.0                   &                      & 23937                    & 5                            & 0.0                   \\
Athens-0.8       & 23968                    & 3295                         & 9.4                   &                      & 23956                    & 3637                         & 4.8                   &                      & 23956                    & 34                           & 0.0                   &                      & 23956                    & 1                            & 0.0                   \\
Athens-0.9       & 24016                    & 3607                         & 10.1                  &                      & 23967                    & 3653                         & 5.0                   &                      & 23967                    & 113                          & 0.0                   &                      & 23967                    & 23                           & 0.0                   \\
Athens-1.0       & 23972                    & 3604                         & 10.0                  &                      & 23972                    & 3625                         & 5.0                   &                      & 23971                    & 146                          & 0.0                   &                      & 23971                    & 14                           & 0.0                   \\ \hline
Erding-0.1 & 11891                    & 3612                         & 0.9                   &                      & 11891                    & 1                            & 0.0                   &                      & 11891                    & 1                            & 0.0                   &                      & 11891                    & 0                            & 0.0                   \\
Erding-0.2 & 11928                    & 3608                         & 1.6                   &                      & 11928                    & 3616                         & 0.1                   &                      & 11928                    & 3                            & 0.0                   &                      & 11929                    & 1                            & 0.0                   \\
Erding-0.3 & 11956                    & 3606                         & 1.9                   &                      & 11956                    & 3619                         & 0.5                   &                      & 11956                    & 296                          & 0.0                   &                      & 11956                    & 0                            & 0.0                   \\
Erding-0.4 & 11977                    & 3605                         & 2.1                   &                      & 11977                    & 3624                         & 0.7                   &                      & 11977                    & 751                          & 0.0                   &                      & 11977                    & 1                            & 0.0                   \\
Erding-0.5 & 11997                    & 3604                         & 2.5                   &                      & 11994                    & 3648                         & 1.1                   &                      & 11994                    & 3618                         & 0.1                   &                      & 11994                    & 1                            & 0.0                   \\
Erding-0.6 & 12012                    & 3605                         & 2.6                   &                      & 12011                    & 3640                         & 1.3                   &                      & 12011                    & 3638                         & 0.1                   &                      & 12011                    & 2                            & 0.0                   \\
Erding-0.7 & 12023                    & 3604                         & 2.6                   &                      & 12020                    & 3622                         & 1.2                   &                      & 12020                    & 3631                         & 0.1                   &                      & 12020                    & 8                            & 0.0                   \\
Erding-0.8 & 12024                    & 3603                         & 2.6                   &                      & 12024                    & 3643                         & 1.3                   &                      & 12026                    & 3636                         & 0.3                   &                      & 12023                    & 7                            & 0.0                   \\
Erding-0.9 & 12030                    & 3603                         & 2.8                   &                      & 12030                    & 3645                         & 1.4                   &                      & 12029                    & 3647                         & 0.3                   &                      & 12026                    & 28                           & 0.0                   \\
Erding-1.0 & 12030                    & 3603                         & 2.6                   &                      & 12033                    & 3620                         & 1.3                   &                      & 12029                    & 3630                         & 0.3                   &                      & 12027                    & 563                          & 0.0                   \\ \hline
\end{tabular}
}
\end{table}
\end{document}